\documentclass[a4paper, twoside, 11pt, english]{article}

\usepackage[T1]{fontenc}
\usepackage[utf8]{inputenc}

\usepackage{etoolbox}

\usepackage{amsmath,amsthm,amssymb,stmaryrd}
\usepackage{mathtools}
\usepackage[ttscale=.875]{libertine}
\usepackage[libertine]{newtxmath}

\usepackage[numbers,sort&compress]{natbib}
\usepackage{comment}
\usepackage{ifthen}
\usepackage{algorithm2e}
\usepackage{multicol}

\usepackage{fancyhdr, titlesec, url, enumerate, microtype,setspace}

\usepackage[all,knot,poly]{xy}
\usepackage{tikz}
\usetikzlibrary{decorations.pathreplacing}

\usepackage{tikz-qtree,varwidth}
\usepackage{youngtab}
\usepackage{ytableau}

\usepackage[english]{babel}

\usepackage{hyperref}

\CompileMatrices

\usetikzlibrary{calc}

\newcount\hh
\newcount\mm
\mm=\time
\hh=\time
\divide\hh by 60
\divide\mm by 60
\multiply\mm by 60
\mm=-\mm
\advance\mm by \time
\def\hhmm{\number\hh:\ifnum\mm<10{}0\fi\number\mm}



\newcommand{\periodafter}[1]{\ifstrempty{#1}{}{#1.}}
\titleformat{\section}[block]{\scshape\filcenter\LARGE\boldmath}{\thesection.}{.5em}{}
\titleformat{\subsection}[block]{\bfseries\filcenter\large\boldmath}{\thesubsection.}{.5em}{\medskip}
\titleformat{\subsubsection}[runin]{\bfseries\boldmath}{\thesubsubsection.}{.5em}{\periodafter}
\titlespacing{\subsubsection}{0pt}{\topsep}{.5em}

\newtheoremstyle{ntheorem}%
	{\topsep}{\topsep}{\itshape}{0pt}{\bfseries}{.}{.5em}%
	{\thmnumber{#2.\hspace{.5em}}\thmname{#1}\thmnote{ (#3)}}
	
\newtheoremstyle{ndefinition}%
	{\topsep}{\topsep}{\normalfont}{0pt}{\bfseries}{.}{.5em}%
	{\thmnumber{#2.\hspace{.5em}}\thmname{#1}\thmnote{ (#3)}}
	
\newtheoremstyle{nremark}%
	{\topsep}{\topsep}{\normalfont}{0pt}{\itshape}{.}{.5em}%
	{\thmnumber{}\thmname{#1}\thmnote{ (#3)}}

\theoremstyle{ntheorem}
  	\newtheorem{theorem}[subsubsection]{Theorem}
  	
	\newtheorem{lemma}[subsubsection]{Lemma}
  	\newtheorem{corollary}[subsubsection]{Corollary}

\theoremstyle{ndefinition}

\makeatletter
\def\@equationname{equation}
\newenvironment{eqn}[1]{%
    \def\mymathenvironmenttouse{#1}%
    \ifx\mymathenvironmenttouse\@equationname%
        \refstepcounter{subsubsection}%
    \else
        \patchcmd{\@arrayparboxrestore}{equation}{subsubsection}{}{}
        \patchcmd{\print@eqnum}{equation}{subsubsection}{}{}%
        \patchcmd{\incr@eqnum}{equation}{subsubsection}{}{}%
    \fi
    \csname\mymathenvironmenttouse\endcsname%
}{%
    \ifx\mymathenvironmenttouse\@equationname%
        \tag{\thesubsubsection}%
    \fi
    \csname end\mymathenvironmenttouse\endcsname%
}
\makeatother

\pagestyle{fancy}
\setlength{\oddsidemargin}{0cm}
\setlength{\evensidemargin}{0cm}
\setlength{\topmargin}{0cm} 
\setlength{\headheight}{1cm}
\setlength{\headsep}{1cm}
\setlength{\textwidth}{16cm}
\setlength{\marginparwidth}{0cm}
\setlength{\footskip}{2cm}
\setlength{\headwidth}{16cm}

\fancyhead{}\fancyfoot[LC,RC]{}
\fancyhead[LE]{\leftmark}
\fancyhead[RO]{\rightmark}
\fancyfoot[LE,RO]{$\thepage$}
\fancypagestyle{plain}{
\fancyhf{}\fancyfoot[LC,RC]{}
\fancyfoot[LE,RO]{$\thepage$}

}

\setlength{\arraycolsep}{1pt}




\UseTips
\SelectTips{eu}{11}

\newdir{ >}{{}*!/-10pt/@{>}}
\newdir{ -}{{}*!/-10pt/@{}}
\newdir{> }{{}*!/+10pt/@{>}}

\makeatletter

\xyletcsnamecsname@{dir4{}}{dir{}}
\xydefcsname@{dir4{-}}{\line@ \quadruple@\xydashh@}
\xydefcsname@{dir4{.}}{\point@ \quadruple@\xydashh@}
\xydefcsname@{dir4{~}}{\squiggle@ \quadruple@\xybsqlh@}
\xydefcsname@{dir4{>}}{\Tttip@}
\xydefcsname@{dir4{<}}{\reverseDirection@\Tttip@}

\xydef@\quadruple@#1{%
	\edef\Drop@@{%
		\dimen@=#1\relax
		\dimen@=.5\dimen@
		\A@=-\sinDirection\dimen@
		\B@=\cosDirection\dimen@
		\setboxz@h{%
			\setbox2=\hbox{\kern3\A@\raise3\B@\copy\z@}%
			\dp2=\z@ \ht2=\z@ \wd2=\z@ \box2
			\setbox2=\hbox{\kern\A@\raise\B@\copy\z@}%
			\dp2=\z@ \ht2=\z@ \wd2=\z@ \box2
			\setbox2=\hbox{\kern-\A@\raise-\B@\copy\z@}%
			\dp2=\z@ \ht2=\z@ \wd2=\z@ \box2
			\setbox2=\hbox{\kern-3\A@\raise-3\B@ \noexpand\boxz@}%
			\dp2=\z@ \ht2=\z@ \wd2=\z@ \box2
		}%
		\ht\z@=\z@ \dp\z@=\z@ \wd\z@=\z@ \noexpand\styledboxz@
	}%
}

\xydef@\Tttip@{\kern2pt \vrule height2pt depth2pt width\z@
	\Tttip@@ \kern2pt \egroup
	\U@c=0pt \D@c=0pt \L@c=0pt \R@c=0pt \Edge@c={\circleEdge}%
	\def\Leftness@{.5}\def\Upness@{.5}%
	\def\Drop@@{\styledboxz@}\def\Connect@@{\straight@{\dottedSpread@\jot}}}
	
\xydef@\Tttip@@{%
	\dimen@=.25\dimen@
 	\B@=\cosDirection\dimen@
	\setboxz@h\bgroup\reverseDirection@\line@ \wdz@=\z@ \ht\z@=\z@ \dp\z@=\z@
	{\vDirection@(1,-1)\xydashl@ \xyatipfont\char\DirectionChar}%
	{\vDirection@(1,+1)\xydashl@ \xybtipfont\char\DirectionChar}%
}

\xydef@\ar@form{
	\ifx \space@\next \expandafter\DN@\space{\xyFN@\ar@form}%
	\else\ifx ^\next \DN@ ^{\xyFN@\ar@style}\edef\arvariant@@{\string^}%
	\else\ifx _\next \DN@ _{\xyFN@\ar@style}\edef\arvariant@@{\string_}%
	\else\ifx 0\next \DN@ 0{\xyFN@\ar@style}\def\arvariant@@{0}%
	\else\ifx 1\next \DN@ 1{\xyFN@\ar@style}\def\arvariant@@{1}%
	\else\ifx 2\next \DN@ 2{\xyFN@\ar@style}\def\arvariant@@{2}%
	\else\ifx 3\next \DN@ 3{\xyFN@\ar@style}\def\arvariant@@{3}%
	\else\ifx 4\next \DN@ 4{\xyFN@\ar@style}\def\arvariant@@{4}%
	\else\ifx \bgroup\next \let\next@=\ar@style
	\else\ifx [\next \DN@[##1]{\ar@modifiers{[##1]}}
	\else\ifx *\next \DN@ *{\ar@modifiers}%
	\else\addLT@\ifx\next \let\next@=\ar@slide
	\else\ifx /\next \let\next@=\ar@curveslash
	\else\ifx (\next \let\next@=\ar@curveinout 
	\else\addRQ@\ifx\next \addRQ@\DN@{\ar@curve@}%
	\else\addLQ@\ifx\next \addLQ@\DN@{\xyFN@\ar@curve}%
	\else\addDASH@\ifx\next \addDASH@\DN@{\defarstem@-\xyFN@\ar@}%
	\else\addEQ@\ifx\next \addEQ@\DN@{\def\arvariant@@{2}\defarstem@-\xyFN@\ar@}%
	\else\addDOT@\ifx\next \addDOT@\DN@{\defarstem@.\xyFN@\ar@}%
	\else\ifx :\next \DN@:{\def\arvariant@@{2}\defarstem@.\xyFN@\ar@}%
	\else\ifx ~\next \DN@~{\defarstem@~\xyFN@\ar@}%
	\else\ifx !\next \DN@!{\dasharstem@\xyFN@\ar@}%
	\else\ifx ?\next \DN@?{\ar@upsidedown\xyFN@\ar@}%
	\else \let\next@=\ar@error
	\fi\fi\fi\fi\fi\fi\fi\fi\fi\fi\fi\fi\fi\fi\fi\fi\fi\fi\fi\fi\fi\fi\fi \next@}

\makeatother


\newcommand{\fl}{\rightarrow}

\newcommand{\dfl}{\Rightarrow}
\newcommand{\dfll}{\Longrightarrow}
\newcommand{\odfl}[1]{\overset{\displaystyle #1}{\dfll}}

\newcommand{\qfl}{\xymatrix@1@C=10pt{\ar@4 [r] &}}



\renewcommand{\tilde}[1]{\widetilde{#1}}



\renewcommand{\phi}{\varphi}
\renewcommand{\epsilon}{\varepsilon}

\newcommand{\Nb}{\mathbb{N}}
\newcommand{\Zb}{\mathbb{Z}}

\newcommand{\Fr}{\EuScript{F}}

\newcommand{\Ir}{\EuScript{I}}

\newcommand{\Lr}{\EuScript{L}}

\renewcommand{\Pr}{\EuScript{P}}

\newcommand{\Sr}{\EuScript{S}}
\newcommand{\Rr}{\EuScript{R}}
\newcommand{\Tr}{\EuScript{T}}




%

%


\newcommand{\ifthen}[2]{\ifthenelse{#1}{#2}{}}



\definecolor{cyan}{RGB}{175,238,238}

\renewcommand{\leq}{\leqslant}
\renewcommand{\geq}{\geqslant}

\newcommand{\Colo}[1]{\mathrm{Col}_{#1}}

\newcommand{\Scol}[1]{\mathrm{Scol}_{#1}}
\newcommand{\Young}[1]{\mathrm{Yt}_{#1}}
\newcommand{\Skew}[1]{\mathrm{Sk}_{#1}}
\newcommand{\dSkew}[1]{\mathrm{dSk}_{#1}}

\newcommand{\Scolc}[1]{\mathrm{Scol}^{\leq}_{#1}}

\def\glueY{\mathcal{Y}}

\newcommand{\Yrow}[1]{\mathbb{Y}^{r}_{#1}}

\newcommand{\dSKrow}[1]{d\mathbb{S}^{r}_{#1}}
\newcommand{\dSKcol}[1]{d\mathbb{S}^{c}_{#1}}

\newcommand{\Ycol}[1]{\mathbb{Y}^{c}_{#1}}

\def\rect{\pi_{tq}} 

\newcommand{\Pl}{\mathbf{P}}


\def\Sr{\mathcal{S}}
\def\Fr{\mathcal{F}}
\def\Rr{\mathcal{R}}
\def\Lr{\mathcal{L}}
\def\Pr{\mathcal{P}}
\def\Ir{\mathcal{I}}
\def\Tr{\mathcal{T}}

\makeatletter
\def\blfootnote{\xdef\@thefnmark{}\@footnotetext}
\makeatother

\newcommand{\insl}[1]{\rightsquigarrow_{#1}}
\newcommand{\insr}[1]{\;\raisebox{0.1em}{{\rotatebox[origin=c]{180}{$\rightsquigarrow$}}}_{#1}\;}

\definecolor{Red}{rgb}{0.96,0.17,0.20}
\definecolor{RedD}{rgb}{0.57, 0.0, 0.04}
\definecolor{Green}{rgb}{0.0,1.0,0.0}
\definecolor{GreenL}{rgb}{0.0,1.0,0.0} 
\definecolor{GreenD}{rgb}{0.64,0.76,0.68} 
\definecolor{Yellow}{rgb}{1.0,1.0,0.19}
\definecolor{YellowD}{rgb}{0.93, 0.84, 0.25}
\definecolor{Blue}{rgb}{0.0,1.0,1.0}
\definecolor{BlueD}{rgb}{0.63,0.79,0.95} 
\definecolor{vert}{rgb}{0,0.45,0}
\definecolor{bazaar}{rgb}{0.6, 0.47, 0.48}
\definecolor{bronze}{rgb}{0.8, 0.5, 0.2}
\definecolor{darkspringgreen}{rgb}{0.09, 0.45, 0.27}

\newcommand{\avoir}[1]{}

\DeclareMathOperator{\Srs}{\mathcal{R}}

\DeclareMathOperator{\nf}{Nf}
\DeclareMathOperator{\fac}{Fac}

\def\points{\rotatebox{90}{\hspace{-1pt}...}}

\begin{document}
\thispagestyle{empty}

\begin{center}

\begin{doublespace}
\begin{huge}
{\scshape String of columns rewriting}

{\scshape and confluence of the jeu de taquin}
\end{huge}

\bigskip
\hrule height 1.5pt 
\bigskip

\begin{Large}
{\scshape Nohra Hage \qquad Philippe Malbos}
\end{Large}
\end{doublespace}

\bigskip

\begin{small}\begin{minipage}{14cm}
\noindent\textbf{Abstract -- } 
Schützenberger's jeu de taquin is an algorithm on the structure of tableaux, which transforms a skew tableau into a Young one by local transformation rules on the columns of the tableaux. 
This algorithm defines an equivalence relation on tableaux compatible with the plactic congruence, and gives a proof of the Littlewood--Richardson rule on Schur polynomials.  In this article, we introduce the notion of string of columns rewriting system as mechanism of transformations of glued sequences of columns. We describe the execution of the jeu de taquin algorithm as rewriting paths of a string of columns rewriting. We deduce algebraic properties on the plactic congruence and we relate the jeu de taquin to insertion algorithms on tableaux.

\medskip

\smallskip\noindent\textbf{Keywords --} Jeu de taquin, string (of columns) rewriting, Young tableaux, plactic monoids.

\medskip

\smallskip\noindent\textbf{M.S.C. 2010 -- Primary:} 20M05. \textbf{Secondary:} 05E99, 68Q42, 20M35.
\end{minipage}\end{small}


\end{center}

\medskip


\section{Introduction}

Schützenberger introduced the \emph{jeu de taquin} as an algorithm on the structure of Young tableaux to prove  the \emph{Littlewood--Richardson rule} on the multiplicity of a Schur polynomial in a product of Schur polynomials, namely the multiplicity of an irreducible representation of the general Lie algebra in a tensor product of two irreducible representations,~\cite{Schutzenberger77}.
The jeu de taquin has later found many applications in algebraic combinatorics and probabilistic combinatorics~\cite{Fulton97,Leeuwen01, RomikSniady15}, and many similar algorithms were also introduced on other structures of tableaux,~\cite{Lecouvey02, Lecouvey03, ThomasYoung09, Tewari2015, RomikSniady15, Hage2021Super}.

A \emph{Young tableau} is  a collection of boxes in left-justified rows filled with elements of the totally ordered alphabet~$[n]:=\{1< \cdots<n\}$, where the entries weakly increase along each row and strictly increase down each column. A \emph{skew tableau} is obtained by eliminating boxes from the rows of a Young tableau starting from top to bottom and from left to right. The eliminated boxes located above and to the left of two non-empty boxes are called \emph{inner corners} of the skew tableau. We read tableaux column-wise from left to right and from bottom to top: the following tableaux
\[
\scalebox{1}{
\ytableausetup{smalltableaux}
\begin{ytableau}
\none & *(Red) & 1 & 2 \\
*(Red) & 1 & 3 \\
1 & 2 \\
3
\end{ytableau}
\qquad
\text{ and }
\qquad
\begin{ytableau}
1 & 1 & 1 & 2 \\
 2  &3\\
 3 \\
\end{ytableau}}
\]
are respectively skew tableau and Young tableau whose readings are $3121312$ and $3213112$, and where the empty red boxes denote the inner corners. 
The jeu taquin consists in applying  successively \emph{forward sliding operations} on a skew tableau that move an inner corner into an outer position by keeping the rows weakly increasing and the columns strictly increasing, until no more inner corners remain in the initial skew tableau, as follows
\[
\scalebox{1}{
\raisebox{0.45cm}{
\ytableausetup{smalltableaux}
\begin{ytableau}
\none & *(Red) & 1 & 2 \\
\none & 1 & 3 \\
1 & 2 \\
3
\end{ytableau}
\raisebox{-0.5cm}{$\;\mapsto\;$}
\begin{ytableau}
\none & 1 & 1 & 2 \\
\none &*(RedD)  &3 \\
1 & 2 \\
3
\end{ytableau}
\raisebox{-0.5cm}{$\;\mapsto\;$}
\begin{ytableau}
\none & 1 & 1 & 2 \\
\none &2  &3 \\
1 & *(GreenD) \\
3
\end{ytableau}
}}
\raisebox{-0.1cm}{$\quad ;\quad$}
\scalebox{0.9}{
\raisebox{0.45cm}{
\begin{ytableau}
\none & 1 & 1 & 2 \\
*(Red) &2  &3 \\
1 & \none \\
3
\end{ytableau}
\raisebox{-0.5cm}{$\;\mapsto\;$}
\begin{ytableau}
\none & 1 & 1 & 2 \\
1 &2  &3 \\
 *(RedD)& \none \\
3
\end{ytableau}
\raisebox{-0.5cm}{$\;\mapsto\;$}
\begin{ytableau}
\none & 1 & 1 & 2 \\
1 &2  &3 \\
 3& \none \\
*(GreenD)
\end{ytableau}
}}
\]
\[
\scalebox{1}{
\raisebox{0.45cm}{
\begin{ytableau}
*(Red) & 1 & 1 & 2 \\
1 &2  &3 \\
 3& \none \\
\end{ytableau}
\raisebox{-0.35cm}{$\;\mapsto\;$}
 \begin{ytableau}
1 & 1 & 1 & 2 \\
*(RedD) &2  &3 \\
 3& \none \\
\end{ytableau}
\raisebox{-0.35cm}{$\;\mapsto\;$}
 \begin{ytableau}
1 & 1 & 1 & 2 \\
2 &*(RedD)  &3 \\
 3& \none \\
\end{ytableau}
\raisebox{-0.35cm}{$\;\mapsto\;$}
\begin{ytableau}
1 & 1 & 1 & 2 \\
 2  &3& *(GreenD)\\
 3& \none \\
\end{ytableau}}}
\]

Schützenberger proved remarkable properties of the jeu de taquin on skew tableaux,~\cite{Schutzenberger77}. He proved that the \emph{rectification}  of a skew tableau by the jeu de taquin is a Young tableau whose reading is equivalent to the reading of the initial skew tableau with respect the \emph{plactic congruence} relation generated by the following \emph{Knuth relations},~\cite{Knuth70}:
\[
zxy = xzy, \; \text{ for }\; 1\leq x\leq y < z \leq n,
\;\text{and}\;\;\; 
yzx = yxz, \; \text{ for }\; 1\leq x<y\leq z\leq n.
\]
This congruence defines  the \emph{plactic monoid of type~A},~\cite{LascouxSchutsenberger81}, which emerged from the works of Schensted~\cite{Schensted61} and Knuth~\cite{Knuth70} on the combinatorial study of Young tableaux. Plactic monoids  have found several applications in algebraic combinatorics, representation theory, probabilistic combinatorics and rewriting theory,~\cite{Lothaire02,Fulton97,OConnell03, HageMalbos17, Hage15,CainGrayMalheiro19}.
Schützenberger proved  that the resulting Young tableau does not depend on the order in which we choose inner corners in the forward slidings. This is the \emph{confluence property} of the jeu de taquin. His proof follows the \emph{cross-section property} of Young tableaux with respect the plactic congruence, proved by Knuth in~\cite{Knuth70}, namely two words on~$[n]$ are plactic congruent if and only if they lead to the same Young tableau after applying Schensted's insertion algorithm,~\cite{Schensted61}. Explicitly, if there are two sequences of sliding operations that transform a tableau~$T$ into two different tableaux $T_1$ and $T_2$, then we continue applying sliding operations until we reach normal forms $\tilde T_1$  and $\tilde T_2$, that is tableaux without inner corners: 
\[
\raisebox{0.6cm}{
\xymatrix @!C @C=1em @R=0.1em {
&  T_1
	\ar [r] ^{}
& {\tilde T_1}
\\
T
	\ar@/^/ [ur]^{}
	\ar@/_/[dr]_{}
\\
&  T_2
\ar [r] _{}
& {\tilde T_2}
}}
\]
Since~$\tilde T_1$ and $\tilde T_2$ are two Young tableaux such that their readings are plactic congruent, following the cross-section property, we deduce that $\tilde T_1=\tilde T_2$.

In this article, we introduce a machinery to prove by using a rewriting approach the confluence of the jeu de taquin. We define the sliding operations as rewriting rules on \emph{strings of columns}, that is strings composed by glued sequences of columns, with a gluing map that describes the relative positions of columns. This combinatorial structure generalizes many structures of tableaux such as skew tableaux,~\cite{Schutzenberger77}, Young tableaux of type $A$,~\cite{Young28}, Young tableaux of type $B$, $C$, $D$ and $G_2$,~\cite{KashiwaraNakashima94}, quasi-ribbon tableaux,~\cite{Novelli00}, and patience sorting structures~\cite{AldousPersi99}.
In Subsection~\ref{SS:Stringofcolumnsrewriting} we define a \emph{string of columns rewriting system} as a binary relation on the set of strings of columns, whose \emph{rules} are applied with respect to right and left positions. That is, a set of rules of the following  form
\[
 |_{p_s}\! u|_{q_s}\dfl |_{p_t}v|_{q_t}, 
\]
where $u,v$ are strings of columns and $p_s,q_s,p_t,q_t$ are positions in $\Zb$.
In Subsection~\ref{SS:JeutaquinAsRewriting} we present the jeu de taquin by the rewriting system~$\Fr\Sr_n$, whose rules are of the following form:
\[
\raisebox{0.5cm}{
\scalebox{0.7}{
\ytableausetup{mathmode, boxsize=0.8em}
\begin{ytableau}
\none & \points\\
\empty& \empty \\
\points &\points\\
\empty& \empty\\
\none & *(GreenL) \empty \\
\none &*(GreenL)\points
\end{ytableau}
}}
\odfl{\alpha}
\raisebox{0.5cm}{
\scalebox{0.7}{
\ytableausetup{mathmode, boxsize=0.8em}
\begin{ytableau}
\none & \points\\
\empty & \empty\\
\points &\points\\
\empty& \empty\\
*(GreenL)\empty&\none \\
*(GreenL)\points & \none
\end{ytableau}
}}, 
\quad
\raisebox{0.5cm}{
\scalebox{0.7}{
\ytableausetup{mathmode, boxsize=0.8em}
\begin{ytableau}
 \points&\points\\
 *(Yellow)\empty&\empty\\
 \points&\points\\
 \empty&*(YellowD)\empty  \\
\points &\points\\
*(BlueD) \empty& \empty \\
\none&\points  \\
\none& *(Blue)\empty \\
 \none&*(GreenL) \points
\end{ytableau}
}}
\raisebox{-0.3cm}{$\odfl{\delta^{\alpha}}$}
\raisebox{0.5cm}{
\scalebox{0.7}{
\ytableausetup{mathmode, boxsize=0.8em}
\begin{ytableau}
\points&\points  \\
*(Yellow) \empty&*(YellowD) \empty \\
\points&\points \\
\empty&\empty \\
\points&\points\\
*(BlueD) \empty& *(Blue) \empty \\
*(GreenL)\points&\none 
\end{ytableau}
}},
\quad
\raisebox{0.5cm}{
\scalebox{0.7}{
\ytableausetup{mathmode, boxsize=0.8em}
\begin{ytableau}
\none & \points\\
\empty & \empty\\
\points&\points\\
\empty& *(GreenL) \empty\\
\points&\points\\
\points & \none
\end{ytableau}
}}
\raisebox{-0.1cm}{$\odfl{\beta}$}
\raisebox{0.5cm}{
\scalebox{0.7}{
\ytableausetup{mathmode, boxsize=0.8em}
\begin{ytableau}
\none & \points\\
\empty & \empty\\
\points&\points\\
*(GreenL)\empty&\empty\\
\points&\points\\
\points & \none
\end{ytableau}
}},
\quad
\raisebox{0.5cm}{
\scalebox{0.7}{
\ytableausetup{mathmode, boxsize=0.8em}
\begin{ytableau}
*(Yellow)\empty &\none\\
\points&\none\\
\points&\points\\
\empty&*(YellowD)\empty \\
\points&\points\\
*(BlueD)\empty& \\
\points &\points\\
\none&\points\\
\none&*(Blue)\empty 
\end{ytableau}
}}
\raisebox{-0.5cm}{$\odfl{\delta^{\beta}}$}
\raisebox{0.5cm}{
\scalebox{0.7}{
\ytableausetup{mathmode, boxsize=0.8em}
\begin{ytableau}
\none&\points\\
*(Yellow)  \empty&\empty\\
\empty&*(YellowD)\empty \\
\points&\points  \\
\empty& \empty \\
*(GreenL) \empty&\empty \\
\points&\points\\
*(BlueD) \empty & *(Blue) \empty\\
\points&\none\\
\empty
\end{ytableau}
}},
\quad
\raisebox{0.5cm}{
\scalebox{0.7}{
\ytableausetup{mathmode, boxsize=0.8em}
\begin{ytableau}
\none & \points\\
\none & *(GreenD)\empty\\
\none & \points\\
*(GreenL)\empty & \empty \\
\points &\points\\
 \empty& *(BlueD) \empty\\
\points & \none\\
*(Blue)\empty & \none\\
\points& \none
\end{ytableau}
}}
\raisebox{-0.4cm}{$\odfl{\gamma}$}
\raisebox{0.5cm}{
\scalebox{0.7}{
\ytableausetup{mathmode, boxsize=0.8em}
\begin{ytableau}
\none & \points\\
*(GreenL) \empty &*(GreenD) \empty\\
\points &\points\\
\empty & \empty\\
\points&\points\\
*(Blue) \empty & *(BlueD) \empty\\
\points & \none
\end{ytableau}}},
\quad
\raisebox{0.5cm}{
\scalebox{0.7}{
\ytableausetup{mathmode, boxsize=0.8em}
\begin{ytableau}
*(Yellow)  \empty&\none\\
\points&\none\\
\empty&*(YellowD)\empty \\
\points &\points\\
*(BlueD) \empty& \empty \\
\none&\points  \\
 \none&*(Blue)\empty\\
 \none&\points
\end{ytableau}
}}
\raisebox{-0.4cm}{$\odfl{\delta}$}
\raisebox{0.5cm}{
\scalebox{0.7}{
\ytableausetup{mathmode, boxsize=0.8em}
\begin{ytableau}
 *(Yellow) \empty&*(YellowD)\empty\\
\points&\points\\
 \empty&\empty\\
\points &\points\\
 \empty&\empty \\
\points &\points\\
*(BlueD) \empty& *(Blue)\empty \\
\none&\points 
\end{ytableau}
}},
\]
see Subsection~\ref{SSS:RulesJeuDetaquin} for detailed positions of the columns.
For instance, the rectification of the above skew tableau is computed with the following reductions: 
\[
\raisebox{0.45cm}{
\ytableausetup{smalltableaux}
\begin{ytableau}
\none & \none & *(GreenD)  1 & 2 \\
\none &*(GreenL) 1 & *(BlueD)  3 \\
1 & *(Blue) 2 \\
3
\end{ytableau}
\raisebox{-0.5cm}{$\;\odfl{\gamma}\;$}
\begin{ytableau}
\none &  1 & 1 & 2 \\
\none &*(GreenD) 2  &3 \\
*(GreenL)1 \\
3
\end{ytableau}
\raisebox{-0.5cm}{$\;\odfl{\gamma}\;$}
\begin{ytableau}
\none & 1& 1 & 2 \\
 1 &*(GreenL) 2& 3 \\
  3 
\end{ytableau}
\raisebox{-0.5cm}{$\;\odfl{\beta}\;$}
\begin{ytableau}
1 & 1& 1 & 2 \\
 2 & \none & *(GreenL)3 \\
3 
\end{ytableau}
\raisebox{-0.5cm}{$\;\odfl{\alpha}\;$}
\begin{ytableau}
1 & 1& 1 & 2 \\
2 & 3 \\
3 
\end{ytableau}
}
\]

The main result of this article, Theorem~\ref{T:RewritingPropertiesYoung}, states that

\begin{quote}
{\bf i) } \emph{The rewriting system $\Fr\Sr_n$ is confluent and terminating.}

{\bf ii)} \emph{The normal forms with respect to~$\Fr\Sr_n$ are Young tableaux.}

{\bf iii)} \emph{Left and right Schensted's insertion algorithms coincide with the leftmost and rightmost normalization strategies of~$\Fr\Sr_n$.}

{\bf iv)} \emph{The rewriting system $\Fr\Sr_n$ computes the cross-section property.}
\end{quote}

The second result of this article, Theorem~\ref{T:MorphismRect}, proves the compatibility of the rewriting system~$\Fr\Sr_n$  with respect the plactic congruence.
Finally, from Theorems~\ref{T:RewritingPropertiesYoung} and~\ref{T:MorphismRect} we recover the cross-section property of Young tableaux with respect the plactic congruence and the commutation of right and left Schensted's insertion algorithms. In particular, we prove  that the rectification map defines a surjective morphism of monoids between the sets of diagonal skew tableaux and the set of Young tableaux equipped with the corresponding insertion products. 

\subsubsection*{Notations}
We will consider the totally ordered set $[n]:=\{1<\cdots<n\}$, for~$n\in\Zb_{>0}$, as ground alphabet. We denote by $[n]^\ast$ the free monoid of \emph{words over $[n]$}, whose empty word is denoted by~$\lambda$.  
We will denote by~$|w|$ the length of a word~$w$ over~$[n]$. 
We will denote by~$<_{\text{lex}}$ the lexicographic order on~$[n]^\ast$ induced by the order on~$[n]$, and by~$\preccurlyeq_{lex}$ and~$\preccurlyeq_{revlex}$ the lexicographic and the reverse lexicographic order respectively on tuples of natural numbers.

\section{String of columns rewriting}
\label{S:StringColumnsRewriting}

This section deals with two-dimensional strings defined by gluing columns by introducing the notion of \emph{string of columns} as a generalization of  the structure of Young tableaux.  
We  define in Subsection~\ref{SS:Stringofcolumnsrewriting} the notion of \emph{string of columns rewriting system} as a binary relation on the set of string of columns, whose rules are applied with respect to right and left positions and we show rewriting properties on theses rewriting systems.

\subsection{Strings of columns}
\label{SS:StringsOfColumns}

\subsubsection{Columns}
A \emph{column} (\emph{over~$[n]$}) is a decreasing string $c^k\ldots c^1$ over $[n]$, \emph{i.e.}, with $c^{i+1} > c^i$, for~$1\leq i < k$. It is represented by a collection of boxes in left-justified rows, filled with elements of~$[n]$, whose each row contains only one box, and the entries strictly increase down:
\[
c \: = \:
\scalebox{0.9}{
\raisebox{0.8cm}{
\ytableausetup
 {mathmode, boxsize=1.2em}
\begin{ytableau}
c^1 \\
c^2 \\
\none[\points]\\
c^k
\end{ytableau}
}}
\]
where $1\leq c^1 < \ldots < c^k \leq n$, and is also denoted by $(c^1;\ldots;c^k)$.
Denote~$|c|:= k$ the \emph{length} of $c$. We denote by~$\Colo{n}$ the set of columns over~$[n]$.
A column of length $0$ is the \emph{empty column} denoted by~$\lambda_c$. 

\subsubsection{String of columns}
Two columns $c_1,c_2$ in $\Colo{n}$ can be \emph{glued at position}~$p$ in $\mathbb{Z}$ as follows:
\[
c_1|_pc_2 
\:=\: 
\raisebox{1.35cm}{
\scalebox{0.9}{
\ytableausetup
 {mathmode, boxsize=1.5em}
\begin{ytableau}
\none & c^1_2\\
\none & \none[\points]\\
\none & c^i_2\\
c^1_1 & c^{i+1}_2\\
c^2_1 & \none[\points]\\
\none[\points]& \none[\points]\\
c^p_1 & c^{i+p}_2\\
\none[\points] & \none\\
c^k_1 & \none
\end{ytableau}
}}
\]
For $1\leq j \leq p$, we say that $(c_1^j,c_2^{i+j})$ is a \emph{full row of length $2$} in $c_1|_pc_2$. A pair $(\!\scalebox{0.35}{
\raisebox{-0.1cm}{
\ytableausetup{mathmode, boxsize=1.8em}
\begin{ytableau}
\,
\end{ytableau}
}}\,,c_2^j)$, for $1\leq j \leq i$, and a pair $(c_1^j,\scalebox{0.4}{
\raisebox{-0.1cm}{
\ytableausetup
 {mathmode, boxsize=1.8em}
\begin{ytableau}
\,
\end{ytableau}
}}\,)$, for $p+1\leq j \leq k$, is called a \emph{row of length $2$}, where $\!\!\scalebox{0.4}{
\raisebox{-0.1cm}{
\ytableausetup
 {mathmode, boxsize=1.8em}
\begin{ytableau}
\,
\end{ytableau}
}}$ denotes the \emph{empty box}.
A \emph{string of columns} is a sequence of glued columns:
\begin{eqn}{equation}
\label{E:StringOfColumns}
w \: = \: c_1|_{p_1} c_2 |_{p_2} \ldots c_{m} |_{p_{m}} c_{m+1}.
\end{eqn}
The sequence~$(p_1,p_2,\ldots,p_m)$ in $\Zb^m$ is called the \emph{gluing sequence} of~$w$. 
Gluing sequences can be defined in a consistent way by considering a \emph{gluing map} $g : \Colo{n} \times \Colo{n} \fl \Zb$ that associates to columns $c$ and~$c'$, a gluing position $g(c,c')$. Given a gluing map $g$, we define the set of strings of columns constructed with respect to $g$ as the set of string of columns of the form~(\ref{E:StringOfColumns}), where for any $1\leq i \leq m$, $p_i = g(c_i,c_{i+1})$.
The \emph{total length of $w$} is the tuple $tl(w)=(|c_1|,\ldots,|c_{m+1}|)\in~\Nb^{m+1}$. We will denote by~$||w||$ the number of columns of~$w$. 

For~$3\leq k\leq m+1$, a \emph{connected row of length $k$} is a sequence~$(c_{i_1}^{j_1},\ldots, c_{i_k}^{j_k})$ such that~$(c_{i_l}^{j_l},c_{i_{l+1}}^{j_{l+1}})$ is a full row of length~$2$ in $c_{i_l}|_{p_{i_l}}c_{i_{l+1}}$, for $1\leq l\leq k$. 
 A connected row~$(c_{i_1}^{j_1},\ldots, c_{i_k}^{j_k})$ is \emph{increasing} if~$c_{i_1}^{j_1}\leq\ldots\leq c_{i_k}^{j_k}$. 
We call a \emph{row} (\emph{over $[n]$}) a string of columns whose gluing sequence is constant equal to~$1$ and columns are of length~$1$.
A string of columns is \emph{row connected} (resp. \emph{row increasing}) if all its rows are connected (resp. increasing).

\subsubsection{Monoids of string of columns}
We will denote by $\Scol{n}$ the set of strings of columns over~$[n]$ and by~$\Scolc{n}$ the set of row connected and row increasing string of columns over~$[n]$.

Given a fixed gluing map $g$, we define a concatenation operation with respect to $g$ by the map $\cdot |_g \cdot : \Scol{n} \times \Scol{n} \fl \Scol{n}$, by setting
\[
(c_1|_{p_1} \ldots |_{p_{m}} c_{m+1} )
\;|_g\; 
(c_1'|_{q_1} \ldots |_{q_{n}} c'_{n+1})
=
c_1|_{p_1} \ldots |_{p_{m}} c_{m+1} |_{g(c_{m+1},c_1')}
c_1'|_{q_1} \ldots |_{q_{n}} c_{n+1}'.
\]
The operation $|_g$ is associative and unitary, where the identity is the \emph{empty string of columns} denoted by~$\lambda_c$.
We denote by $\Scol{n}^g$ the set of string of columns in $\Scol{n}$ whose gluing sequence is given by the gluing map $g$. In other words, $\Scol{n}^g$ is the free monoid on $\Colo{n}$ with respect the product $|_g$.

\subsubsection{The four corner readings}
\label{SSS:ReadingMaps}
The \emph{south-west reading} is the map $R_{SW}: \Scol{n} \fl [n]^\ast$ that reads a string of columns, column-wise  from left to right and from bottom to top. There are three other corner readings $R_{NW}$, $R_{NE}$, $R_{SW}$ and $R_{SE}$ defined in a similar way and that read a string of columns, column by column, with respect right or left and top or bottom directions.

Define the map~$\fac:[n]^\ast\to [n]^\ast$ sending a word $w$ into the factorization~$w=w_1\ldots w_k$, where each~$w_i$, for $i=1,\ldots,k$, is a maximal strictly decreasing sequence, that is, the $R_{SW}$-reading of a column in~$\Colo{n}$. 
For a fixed gluing map~$g\in\mathbb{Z}^n$,  consider the map
\begin{eqn}{equation}
[\;]_g:[n]^\ast \to \Scol{n}
\end{eqn}
that transforms each word~$w$ in~$[n]^\ast$ into a string of columns~$(c_1,\ldots,c_k)$ where each column~$c_i$ is filled by the elements of~$w_i$ in~$\fac(w)$ from bottom to top, for~$i=1,\ldots,k$, with respect the gluing map~$g$.

\subsubsection{Properties of strings of columns}
A row connected string of columns $w$ as in (\ref{E:StringOfColumns}) is called \begin{enumerate}[{\bf i)}]
\item \emph{left-justified} (resp. \emph{right-justified}) if $|c_i|\geq |c_{i+1}|$  and $|c_{i+1}|\leq p_i\leq |c_{i}|$ (resp.~$|c_{i+1}|\geq |c_{i}|$ and~$|c_{i}|\leq p_i\leq |c_{i+1}|$),  for all $1\leq i\leq m$.
\item \emph{top-justified}, (resp. \emph{bottom-justified}) if
$p_i=|c_{i+1}|$ (resp.\; $p_i=|c_i|$), for all $1\leq i\leq m$.
\item \emph{decreasing} (resp. \emph{increasing}) if its gluing sequence is decreasing (resp. increasing).
\end{enumerate}

\subsubsection{Example: skew tableaux}
\label{SSS:ExampleSkewTableaux}
A \emph{skew tableau} with $m+1$ columns is a string of columns $ c_1|_{p_1} \ldots |_{p_{m}} c_{m+1}$ in $\Scolc{n}$,  whose gluing sequence satisfies 
$p_k \leq |c_{k+1}|$, for all $1\leq k \leq m$.
A \emph{diagonal skew tableau} is a skew tableau $c_1|_{p_1} \ldots |_{p_{m}} c_{m+1}$ whose gluing sequence satisfies $p_k=1$, for all~$1\leq k \leq m$. We denote by~$s$ the gluing map for diagonal skew tableaux.
We will denote by $\Skew{n}$ (resp. $\dSkew{n}$) the set of skew (resp.~diagonal skew) tableaux over $[n]$.
Any string $u$ over $[n]$ is the $R_{SW}$-reading of a unique diagonal skew tableau, thus the map~$R_{SW}$ defines a bijection from $\dSkew{n}$ to~$[n]^\ast$. 
An \emph{inner corner} in a skew tableau~$w$ is an empty box located above and to the left of two non-empty boxes. An \emph{outer corner} in~$w$ is an empty box located to the end of a row or at the bottom of a column. 

We define the \emph{top} (resp. \emph{bottom}) \emph{concatenation} of an element $x$ in a column $c=(c^1;\ldots;c^k)$ as the skew tableau defined by
\[
c \insr{a} x
\: = \:
\begin{cases}
c |_1 x &\mbox{if } x\geq c^1,
\\
(x;c^1;\ldots;c^k) &\mbox{else.}
\end{cases}
\qquad
\mbox{\big(resp.\;}
x \insl{a} c
\: = \:
\begin{cases}
(c^1;\ldots;c^k;x) &\mbox{if } x> c^k,
\\
x|_1 c &\mbox{else.}
\end{cases}
\mbox{\;\big).}
\]

\noindent We extend these concatenations into insertion maps on skew tableaux, defined for $x\in[n]$ and $w=c_1|_{p_1}\ldots |_{p_m} c_m$ in $\Skew{n}$ by setting
\begin{eqn}{equation}
\label{E:LeftRightConcatenationSkew}
w \insr{I_r^a} x =c_1|_{p_1}\ldots |_{p_m} (c_m \insr{a} x),
\quad
\text{(resp.\;}
x \insl{I_l^a} w = (x \insl{a} c_1)|_{p_1}\ldots |_{p_m} c_m
\; \text{).}
\nonumber
\end{eqn}

For any word $w=x_1\ldots x_k$, denote by~$C_{\dSKrow{n}}(w)$ (resp.~$C_{\dSKcol{n}}(w)$) the diagonal skew tableau obtained from~$w$ by inserting its letters iteratively from left to right (resp. right to left) using the right (resp. left) insertion starting from the empty tableau:
\[
\begin{array}{rl}
C_{\dSKrow{n}}(w)
\;
:=&
\;
(\emptyset \insr{I_{r}^{a}} w)
\;
=
\;((\ldots(\emptyset \insr{I_{r}^{a}} x_1)  \insr{I_{r}^{a}} \ldots )\insr{I_{r}^{a}} x_k),\\
\big(\text{resp. }
C_{\dSKcol{n}}(w)
\;
:=&
\;
(w\insl{I_{l}^{a}} \emptyset )
\;
=
(x_1 \insl{I_{l}^{a}} ( \ldots\insl{I_{l}^{a}}  (x_k\insl{I_{l}^{a}} \emptyset)\ldots))\big).
\end{array}
\]
Define now an internal product~$\star_{I_{r}^{a}}$ (resp.~$\star_{I_{l}^{a}}$) on~$\dSkew{n}$ by setting 
\begin{eqn}{equation}
\label{Eq:StructureMonoidProduct}
t \star_{I_{r}^{a}} t' :=  (t\insr{I_{r}^{a}} R_{SW}(t')),
\qquad
\big(\text{resp. }  t \star_{I_{l}^{a}} t' :=  (R_{SW}(t')\insl{I_{l}^{a}}t)\big)
\end{eqn}
for all $t,t'$ in $\dSkew{n}$.
By definition the relations $t\star_{I_{r}^{a}} \emptyset = t$ (resp. $t\star_{I_{l}^{a}} \emptyset = t$) and $\emptyset \star_{I_{r}^{a}} t = t$ (resp.~$\emptyset \star_{I_{l}^{a}} t = t$) hold, showing that the product~$\star_{I_{r}^{a}}$ (resp.~$\star_{I_{l}^{a}}$) is unitary with respect to~$\emptyset$.

The top (resp. bottom) concatenation on a diagonal skew tableau~$w$ acts only on the last (resp. first) column of~$w$ and do not change the others columns. As a consequence, for all~$x,y\in [n]$, we have the following \emph{commutation property}: 
\begin{eqn}{equation}
\label{E:CommutationAppend}
y \insl{I_l^a} (w \insr{I_r^a} x)
\: = \:
(y \insl{I_l^a} w) \insr{I_r^a} x.
\end{eqn}
Hence, we deduce that the insertion products~$\star_{I_{r}^{a}}$  and~$\star_{I_{l}^{a}}$ are associative.

\subsubsection{Example: Young tableaux}
\label{SSS:YoungTableaux}
A \emph{Young tableau} with $m+1$ columns is a string of columns $ c_1|_{p_1} \ldots  |_{p_{m}} c_{m+1}$ in~$\Scolc{n}$ whose gluing sequence is decreasing and satisfies
\begin{eqn}{equation}
\label{E:gluingSequenceYoung}
p_k = |c_{k+1}|
\quad 
\text{for all $1\leq k \leq m$.}
\end{eqn}
We denote by~$\glueY$ the gluing map for Young tableaux, and by~$\Young{n}$ the set of Young tableaux over~$[n]$.

Given a row $r$ (resp. a column~$c$), we denote  by \textsc{RowInsert}$(r,x)$ (resp.~\textsc{ColumnInsert}$(c,x)$) the procedure that inserts an element $x$ in a row~$r$ (resp.~column~$c$) and returns a pair $(r',y)$ (resp.~$(c',y)$) made of the resulting row $r'$ (resp.~column $c'$) and the bumping element $y$ that can be empty, as follows. If $x$ is bigger or equal (resp. strictly bigger) than all the elements of~$r$ (resp.~$c$), then~$r'$ (resp.~$c'$) is obtained by adding~$x$ to the end (resp.~the bottom) of~$r$ (resp.~$c$) and~$y$ is empty. Otherwise, let~$y$ be the smallest element of~$r$ (resp.~$c$) such that~$x<y$ (resp.~$x\leq y$), then~$r'$ (resp.~$c'$) is obtained from~$r$ (resp.~$c$) by replacing~$y$ by~$x$.
The \emph{right} (resp.~\emph{left}) \emph{insertion algorithm} computes a tableau $(t \insr{S_r} x)$ (resp.~$(x \insl{S_l} t)$) as follows,\cite{Schensted61}: 

\setlength{\columnsep}{0.1cm}
\begin{multicols}{2}
\scalebox{0.9}{
\begin{algorithm}[H]
\DontPrintSemicolon

\noindent \textsc{RightInsertYT}$(t,x)$ 

\KwIn{A Young tableau $t$ and $x$ in $[n]$.}
\KwOut{The Young tableau $(t \insr{S_r} x)$.}

\BlankLine

$y := x$ ; $t' := \emptyset$ ;

\While{$y\neq \lambda$}{

$r:= t[1]$ ;

$t:= t/r$ ;

$(r',y):=\textsc{RowInsert}(r,y)$

$t' : = (t' ; r')$

}

\Return{$(t' ; t)$}

\BlankLine

\BlankLine

\caption{Schensted's right algorithm}
\label{A:RightSchensted}
\end{algorithm}
\quad
\begin{algorithm}[H]
\DontPrintSemicolon

\noindent \textsc{LeftInsertYT}$(t,x)$

\KwIn{A Young tableau $t$ and $x$ in $[n]$.}
\KwOut{The Young tableau $(x \insl{S_l} t)$.}

\BlankLine

$y := x$ ; $t' := \emptyset$ ;

\While{$y\neq \lambda$}{

$c:= t_{[1]}$ ;

$t:= t_{/c}$ ;

$(c',y):=\textsc{ColumnInsert}(c,y)$

$t' : = [t' ; c']$

}

\Return{$[t' ; t]$}

\BlankLine

\BlankLine

\caption{Schensted's left algorithm}
\label{A:LeftSchensted}
\end{algorithm}}
\end{multicols}

\noindent where $t[i]$ (resp.~$t_{[i]}$) denotes the $i$-th row (resp.~column) of the tableau $t$, and $t/t[1]$ (resp.~$ t_{/t_{[1]}}$) the Young tableau obtained from~$t$ by removing its first row (resp.~column), and where $(t;t')$ (resp.~$[t;t']$) denotes the Young tableau obtained by concatenating $t$ over (resp.~to the right of) a Young tableau $t'$ when the concatenation defines a Young tableau.

For any word $w=x_1\ldots x_k$, denote by~$C_{\Yrow{n}}(w)$ (resp.~$C_{\Ycol{n}}(w)$) the Young tableau obtained from~$w$ by inserting its letters iteratively from left to right (resp. right to left) using the right (resp. left) insertion starting from the empty tableau:
\[
\begin{array}{rl}
C_{\Yrow{n}}(w)
\;
:=&
\;
(\emptyset \insr{S_r} w)
\;
=
\;((\ldots(\emptyset \insr{S_r} x_1)  \insr{S_r} \ldots )\insr{S_r} x_k),\\
\big(\text{resp. }
C_{\Ycol{n}}(w)
\;
:=&
\;
(w\insl{S_l} \emptyset )
\;
=
(x_1 \insl{S_l} ( \ldots\insl{S_l}  (x_k\insl{S_l} \emptyset)\ldots))\big).
\end{array}
\]
Define now an internal product~$\star_{S_r}$ (resp.~$\star_{S_l}$) on~$\Young{n}$ by setting 
\begin{eqn}{equation}
\label{Eq:StructureMonoidProduct}
t \star_{S_r} t' :=  (t\insr{S_r} R_{SW}(t')),
\qquad
\big(\text{resp. }  t \star_{S_l} t' :=  (R_{SW}(t')\insl{S_l}t)\big)
\end{eqn}
for all $t,t'$ in $\Young{n}$.
By definition the relations $t\star_{S_r} \emptyset = t$ (resp. $t\star_{S_l} \emptyset = t$) and $\emptyset \star_{S_r} t = t$ (resp.~$\emptyset \star_{S_l} t = t$) hold, showing that the product~$\star_{S_r}$ (resp.~$\star_{S_l}$) is unitary with respect to~$\emptyset$.

Let~$c$ be a column  of length~$p$, the \emph{Sch\"{u}tzenberger involution} of~$c$, denoted by~$c^{\ast}$, is the column of length~$n-p$ obtained by taking the complement of the elements of~$c$. This involution is extended to string of columns by setting~$(c_1|\ldots |c_r)^\ast= c_r^\ast|\ldots |c_1^\ast$, for all~$c_1,\ldots, c_r$ in~$\Colo{n}$. If~$c_1|_\glueY\ldots |_\glueY c_r$ is a Young tableau,  then~$(c_1|_\glueY \ldots |_\glueY c_r)^{\ast} = c_r^{\ast}|_\glueY \ldots |_\glueY c_1^{\ast}$  is also a Young tableau. Moreover,  the following equality~$(c_1\star_{S_r} \ldots \star_{S_r} c_r)^\ast = (c_r^{\ast}\star_{S_r} \ldots\star_{S_r} c_1^{\ast})$ holds, for all~$c_1,\ldots, c_r$ in~$\Colo{n}$. In particular, for three columns~$c_i$,~$c_j$ and~$c_k$ in~$\Colo{n}$, we have~$(c_i\star_{S_r} c_j\star_{S_r} c_k)^{\ast} =(c_k^{\ast}\star_{S_r}c_j^{\ast}\star_{S_r} c_i^{\ast})$, see~{\cite[Remark~3.2.7]{HageMalbos17}}.

\subsection{String of columns rewriting}
\label{SS:Stringofcolumnsrewriting}

\subsubsection{Rewriting steps}
Let define $\Scol{n}^\Pr = \Zb \times \Scol{n} \times \Zb$, whose elements are triples $(p,u,q)$ where~$p,q$ are positions in $\Zb$ and $u$ is a string of columns, that we will denote by $ |_pu|_q$. 
We define a \emph{string of columns rewriting system}, called \emph{rewriting system} for short in the sequel, as a binary relation on $\Scol{n}^\Pr$. That is, a set of \emph{rules} of the form
\begin{eqn}{equation}
\label{E:Rule}
\alpha : |_{p_s}\! u|_{q_s}\dfl |_{p_t}v|_{q_t}, 
\end{eqn}
where $u,v$ are strings of columns in $\Scol{n}$ and $p_s,q_s,p_t,q_t$ are positions in $\Zb$.
The pair $(p_s,q_s)$ (resp.~$(p_t,q_t)$) is called the \emph{source} (resp. \emph{target }) \emph{positions}, and $u$ (resp. $v$) is called the \emph{string of columns source} (resp. \emph{target}) of the rule $\alpha$, denoted by $s(\alpha)$ (resp. $t(\alpha)$).

A string of columns $w$ is said to be \emph{reducible with respect to $\alpha$}, if there is a factorization $w = w_1|_{p_s}s(\alpha)|_{q_s}w_2$ in $\Scol{n}$. In that case, $w$ \emph{reduces into} $w'=w_1|_{p_t}t(\alpha)|_{q_t}w_2$. Such a \emph{reduction} is denoted by $w_1|_{p_s}\alpha|_{q_s} w_2$, or $\alpha$ if there is no possible confusion. Given a rewriting system $\Rr$, the set of all reductions defines a binary relation on $\Scol{n}$, called the \emph{$\Rr$-rewrite relation} that we will denote by $\dfl_\Rr$, or~$\dfl$ if there is no possible confusion. The elements of $\dfl_{\Rr}$ are called \emph{$\Rr$-rewriting steps}, and have the form
\begin{eqn}{equation}
\label{E:ReductionStep}
|_{p_1}w_1 |_{p_s} s(\alpha) |_{q_s} w_2 |_{p_2}\dfl |_{p_1}w_1 |_{p_t} t(\alpha) |_{q_t} w_2|_{p_2},
\end{eqn}
for all $\alpha$ in $\Rr$ and $w_1,w_2$ in~$\Scol{n}$.
In~(\ref{E:ReductionStep}) the data $|_{p_1}w_1 |_{p_s} - |_{q_s} w_2|_{p_2}$ is called the \emph{context} of the rule~$\alpha$. If we denote by $C$ this context, the reduction~(\ref{E:ReductionStep}) can be also denoted by $C[\alpha]$.

We denote by $\dfl_\Rr^\ast$ the reflexive and transitive closure of the relation $\dfl_\Rr$, whose elements are called \emph{$\Rr$-rewriting paths}.

\subsubsection{Rewriting properties}
A rewriting system $\Rr$ is \emph{terminating} if there is no infinite $\Rr$-rewriting path.
A \emph{local branching} (resp. \emph{branching}) of $\Rr$ is a pair $(\varphi,\psi)$ of $\Rr$-rewriting steps (resp. $\Rr$-rewriting paths) having the same source as depicted in the following reduction diagram:
\[
\xymatrix @C=1.8em @R=0em{
& |_{p_1}w_1|_{q_1}
\\
|_pw|_q
  \ar@2@/^2ex/[ur] ^-{\varphi}
  \ar@2@/_2ex/[dr] _-{\psi}
&\\
& |_{p_2}w_2|_{q_2}
}
\]
Such a branching is \emph{confluent} if there exist $\Rr$-rewriting paths $\varphi'$ and $\psi'$ with a common target as follows:
\begin{eqn}{equation}
\label{E:confluentBranching}
\raisebox{0.5cm}{
\xymatrix @C=1.8em @R=0em{
& 
|_{p_1}w_1 |_{q_1}
  \ar@2@/^2ex/[dr] ^-{\varphi'}
\\
|_p w |_q
  \ar@2@/^2ex/[ur] ^-{\varphi}
  \ar@2@/_2ex/[dr] _-{\psi}
&&
|_{p'} w' |_{q'}
\\
& 
|_{p_2} w_2 |_{q_2}
  \ar@2@/_2ex/[ur] _-{\psi'}
}}
\end{eqn}
We say that $\Rr$ is \emph{locally confluent} (resp. \emph{confluent}) if any local branching (resp. branching) of $\Rr$ is confluent, and that $\Rr$ is convergent if it is confluent and terminating.
A string of columns $w$ is in \emph{normal form with respect to $\Rr$}, if there is no rule that reduces $w$. 
When $\Rr$ is convergent, any string of columns~$w$ has a unique normal form, denoted by $\nf(w,\Rr)$.

\subsubsection{Critical branching}
A local branching of the form $(\varphi,\varphi)$ is called \emph{aspherical}.
A local branching~$(\varphi,\psi)$ is called \emph{orthogonal} if the source of $\varphi$ does not \emph{overlap} with the source of $\psi$, that is the source of the branching is of the form $|_{p_0}w_1|_{p_1}s(\varphi)|_{q_1}w_2|_{p_2}s(\psi)|_{q_2}w_3|_{p_3}$, with $w_1,w_2,w_3$ in $\Scol{n}$.
A local branching that is neither aspherical nor orthogonal is called \emph{overlapping}. 
There are three shapes of overlapping branchings $(\varphi,\psi)$, where
$s(\varphi)=|_{p_0}w_1|_{p_s}s(\alpha)|_{q_s}w_2|_{p_2}$, and $s(\psi)=|_{p_0'}w_1'|_{p'_s}s(\beta)|_{q'_s}w'_2|_{p_2'}$, with~$\alpha,\beta\in\Rr$, described by the following situations:
\begin{enumerate}[{\bf i)}]
\item (\emph{position overlapping}) $q_s=q'_s$,
\item (\emph{string overlapping}) $s(\alpha) = |_{p_s} u_\alpha |_{p'_s} v_\alpha |_{q_s}$ and $s(\beta) = |_{p'_s}v_\alpha |_{q_s}u_\beta|_{q'_s}$,
\item (\emph{inclusion}) $s(\beta) = |_{p'_s}u_\beta |_{p_s} s(\alpha) |_{q_s} v_{\beta} |_{q'_s}$.
\end{enumerate}

An overlapping branching that is minimal for the relation $\sqsubseteq$ on branchings generated by 
\[
\raisebox{-0.2cm}{\xymatrix @C=1.8em @R=0.1em{
& |_{p_1}w_1|_{q_1}
\\
|_p w|_q
  \ar@2@/^/[ur] ^-{\varphi}
  \ar@2@/_/[dr] _-{\psi}
&\\
& |_{p_2}w_2|_{q_2}
}}
\quad\raisebox{-0.8cm}{$\sqsubseteq$}\quad
\xymatrix @C=1.8em @R=0.5em{
&
|_{p'}u|_{p_1}w_1|_{q_1}v|_{q'}
\\
|_{p'}u|_pw|_qv|_{q'}
  \ar@2@/^2ex/[ur] ^-{|_{p'}u|_p\varphi |_qv|_{q'}}
  \ar@2@/_2ex/[dr] _-{|_{p'}u|_p\psi|_qv|_{q'}}
&\\
&
|_{p'}u|_{p_2}w_2|_{q_2}v|_{q'}
},
\]
for any branching $(\varphi,\psi)$ and context $u |_p - |_q v$ of reductions $\varphi,\psi$, is called a \emph{critical branching}.

\begin{lemma}
\label{L:Convergence}
A rewriting system $\Rr$ is locally confluent if and only if all its critical branchings are confluent. 
Moreover, if $\Rr$ is terminating with all its critical branchings are confluent, then it is confluent.
\end{lemma}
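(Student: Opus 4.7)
The plan is to prove the two statements in order: first the equivalence between local confluence and confluence of critical branchings, then Newman's lemma specialized to this setting.

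For the first equivalence, one direction is immediate: critical branchings are a particular class of local branchings, so local confluence implies confluence of critical branchings. For the converse, I would classify any local branching $(\varphi, \psi)$ of $\Rr$ with source $|_p w |_q$ according to the three cases described just before the statement (aspherical, orthogonal, overlapping). The aspherical case is trivial since $\varphi = \psi$, so both reductions converge to a common target in zero further steps. For orthogonal branchings, the sources $s(\alpha)$ and $s(\beta)$ of the two rules sit in disjoint factors of~$w$, separated by some intermediate string of columns~$w_2$; since the rewriting steps act by replacing contexts of the form $|_{p_s} - |_{q_s}$, the two rules commute, and closing the diagram amounts to applying~$\alpha$ after~$\psi$ and~$\beta$ after~$\varphi$. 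For overlapping branchings, by definition they factor through a critical branching inside some context~$|_{p'} u |_p - |_q v |_{q'}$; confluence of the critical branching then lifts back to confluence of the full local branching by extending both closing paths with the same outer context, using the fact that contextual composition preserves rewrite paths.

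For the second statement, I would adapt Newman's lemma to string of columns rewriting. The assumption of termination provides a well-founded strict order on $\Scol{n}$ given by the transitive closure of $\dfl_\Rr$. I would proceed by Noetherian induction on this order, showing that every element $w$ is confluent. Given a branching~$w \dfl^\ast w_1$ and $w \dfl^\ast w_2$, one extracts first rewriting steps $w \dfl w_1'$ and $w \dfl w_2'$; by local confluence (which holds by the first part of the statement combined with the hypothesis that critical branchings are confluent), there is a common reduct~$w'$ of $w_1'$ and $w_2'$. Then, applying the induction hypothesis to $w_1'$, one finds a common reduct of $w_1$ and $w'$; and applying it to $w_2'$, a common reduct of that and $w_2$ is obtained. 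Standard diagram chasing closes the square.

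The main obstacle is the formalization of the overlapping case, where one must verify that the three shapes of overlappings (position overlap, string overlap, inclusion) really do reduce via the minimality relation~$\sqsubseteq$ to critical branchings in a unique way. This requires being careful with the gluing positions~$p_s, q_s$, since the string of columns structure means that a rule's source carries not just a word but also its gluing data; one must check that the source factorizations involved in the definitions of overlap are compatible with the gluing maps on both sides. Once this bookkeeping is done, the remainder of the argument follows the classical pattern verbatim.
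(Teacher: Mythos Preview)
Your proposal is correct and follows essentially the same approach as the paper: handle aspherical and orthogonal branchings directly, reduce overlapping branchings to critical ones via a context factorization, and then invoke Newman's lemma for the second statement. The paper is in fact more terse---it simply cites Newman~\cite{Newman42} rather than spelling out the Noetherian induction---and it does not dwell on the gluing-position bookkeeping you flag as the main obstacle, so your sketch is if anything more detailed than the original.
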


\begin{proof}
The first statement is the critical branching lemma.
Suppose that all the critical branchings of $\Rr$ are confluent and prove that any local branching of $\Rr$ is confluent. By definition, every aspherical branching is trivially confluent, and every orthogonal local branching is confluent.
Consider an overlapping but not minimal local branching $(\varphi,\psi)$, there exist factorizations $\varphi=C[\varphi']$ and $\psi=C[\psi']$, where $(\varphi',\psi')$ is a critical branching of $\Rr$. By hypothesis, this branching is confluent, and there are reductions paths $\phi'':t(\varphi') \fl w$ and $\psi'':t(\psi') \fl w$ that reduce targets of $\phi'$ and $\psi'$ to the same string of columns $w$.
It follows that the reductions paths~$C[\phi'']$ and~$C[\psi'']$ make the branching~$(\phi,\psi)$ confluent.

The second statement is an immediate consequence of Newman's lemma, \cite{Newman42}, that proves that any locally confluent terminating rewriting system is confluent.
\end{proof}

\subsubsection{Normalization strategies}
A \emph{reduction strategy} for a rewriting system $\Rr$ specifies a way to apply the rules in a deterministic way.
When $\Rr$ is normalizing, a \emph{normalization strategy} is a mapping $\sigma$ of every string of columns $|_pw|_q$ to a rewriting path $\sigma_{|_pw|_q}$ with source $|_pw|_q$ and target a  chosen normal form of~$|_pw|_q$ with respect to~$\Rr$. For a reduced rewriting system, we distinguish the leftmost reduction strategy and the rightmost one, according to the way we apply first the rewriting rule that reduces the leftmost or the rightmost string of columns. They are defined as follows. 
For every string of columns~$|_pw|_q$, the set of rewriting steps with source~$|_pw|_q$ can be ordered from left to right by setting~$\phi\prec \psi$, for rewriting steps 
$\phi = |_pw_1|_{p_s}\alpha|_{q_s}w_2|_q$
and
$\phi = |_pw_1'|_{p_s}\beta|_{q_s}w_2'|_q$
such that~$||w_1|| <||w_1'||$. If~$\Rr$ is finite, then the order~$\prec$ is total and the set of rewriting steps of source~$|_pw|_q$ is finite. Hence this set contains a smallest element~$\sigma_{|_pw|_q}$ and a greatest element~$\eta_{|_pw|_q}$, respectively called the \emph{leftmost} and the \emph{rightmost rewriting steps on~$|_pw|_q$}. If, moreover, the rewriting system terminates, the iteration of~$\sigma$ (resp.~$\eta$) yields a normalization strategy for~$\Rr$ called the \emph{leftmost} (resp.\ \emph{rightmost}) \emph{normalization strategy of~$\Rr$}: 
\[
\rho_\Rr^\top(|_pw|_q) = \sigma_{|_pw|_q} | \rho_\Rr^\top (t(\sigma_{|_pw|_q}))
\qquad
(\text{resp.\ }
\rho_\Rr^\perp(|_pw|_q) = \eta_{|_pw|_q} | \rho_\Rr^\perp(t(\eta_{|_pw|_q}))\;).
\]
The \emph{leftmost} (resp. \emph{rightmost}) \emph{rewriting path} on a string of columns $|_pw|_q$  is the rewriting path obtained by applying the leftmost (resp. rightmost) normalization strategy $\rho^\top_{\Rr}$ (resp. $\rho^\perp_{\Rr}$).
We refer the reader to~\cite{GuiraudMalbos12advances} and \cite{GuiraudMalbos18} for more details on rewriting normalization strategies.

\subsubsection{Top-left sliding order}
\label{SSS:TopLEftSlidingOrder}
A way to prove termination of a string of columns rewriting system~$\Rr$ is to consider a map~\mbox{$f: \Scol{n} \fl (X,\prec)$,} where $(X,\prec)$ is a well-ordered set satisfying, for all $w,w'\in \Scol{n}$,
\[
w \dfl_\Rr w' 
\quad
\text{implies}
\quad
f(w') \prec f(w).
\]
We will use the following termination order.
Let $w=u_{1}|_{p_1}\ldots|_{p_{m-1}} u_{m}$ be in $\Scol{n}$.
Denote by $h_{u_k}$ the number of empty boxes between the top box of the column~$u_k$ and the \emph{top position of $w$}, shown by the blue line in the following  picture 
\[
\scalebox{0.4}{
\begin{tikzpicture}[inner sep=0in,outer sep=0in]
\node (n) {\begin{varwidth}{5cm}{
\ytableausetup{mathmode, boxsize=1.9em}
\begin{ytableau}
\none&\none&\none&\empty&\none&\none\\
\none&\none&\none&\points&\none&\none\\
\none&\none&\empty&\empty&\none&\none \\
\points&\none[\ldots]&\points&\points&\none&\none[\ldots]&\points\\
\empty&\none[\ldots]&\empty&\empty&\none&\none[\ldots]&\empty\\
\empty&\none[\ldots]&\none&\empty&\empty&\none[\ldots]&\none \\
\points&\none&\none&\points&\points&\none 
\end{ytableau}}
\end{varwidth}
};
\draw[ultra thick,blue](-2.48,2.6)--(2.73,2.6);
\draw[ultra thick,Green](-2.48,0.38)--(-2.48,2.6);
\draw[ultra thick,Green](2.73,2.6)--(2.73,0.38);
\end{tikzpicture}
}
\]
Define the \emph{top deviation of $w$} as the sequence~$d^\top(w) = (h_{u_{1}},\ldots,h_{u_{m}}) \in \Nb^{m}$.
Denote by~$\preceq_{lex}$ the total order on~$\Scol{n}$ defined by $w\preceq_{lex} w'$ if and only if
\[
tl(w)\prec_{revlex} tl(w')\; \text{ or } \;\big(\,tl(w) = tl(w') \;\text{ and }\; d^\top(w) \prec_{lex} d^\top(w')\,\big).
\]
In order to prove the termination of top-left sliding operations presented in~\ref{SSS:RulesJeuDetaquin}, we define the total order~$\ll_{tl}$ on~$\Scol{n}$ by setting, for $w,w'$ in~$\Scol{n}$, $w\ll_{tl} w'$ if and only if
\[
||w||<||w'|| \;\text{ or }\;\big(||w|| = ||w'|| \text{ and } w \preceq_{lex} w'\big).
\]

\section{Convergence of the jeu de taquin}
\label{S:ConvergenceJeuTaquin}

In this section, we study the confluence of the jeu de taquin through a rewriting system defined by column slidings. We show that this rewriting system is convergent and we present the jeu de taquin as a surjective map from the set of diagonal skew tableaux to the set of Young tableaux using insertion. We recover properties relating the jeu de taquin to the plactic congruence and insertion algorithms on the structure of Young tableaux.

\subsection{Jeu de taquin}
\label{SS:JeuDetaquin}

\subsubsection{Plactic monoids}
Recall that the \emph{plactic monoid (of type $A$) of rank $n$}, introduced in~\cite{LascouxSchutsenberger81}, and denoted by~$\Pl_n$, is generated on $[n]$ and submitted to the following \emph{Knuth relations},~\cite{Knuth70}:
\begin{eqn}{equation}
\label{E:KnuthRelations}
zxy = xzy, \; \text{ for }\; 1\leq x\leq y < z \leq n,
\;\text{and}\;\;\; 
yzx = yxz, \; \text{ for }\; 1\leq x<y\leq z\leq n.\end{eqn}
Knuth in \cite{Knuth70} described the congruence~$\approx_{\Pl_n}$ generated by these relations in terms of Young tableaux and proved the cross-section property for the monoid $\Pl_n$.

\subsubsection{Forward sliding,~\cite{Schutzenberger77}}
A \emph{forward sliding} is a sequence of the following slidings:
\[
\ytableausetup{mathmode, smalltableaux}
\scalebox{1}{
\begin{ytableau}
 *(Red)&y\\
   x
\end{ytableau}}
 \;\leftrightarrow\;
\scalebox{1}{
 \begin{ytableau}
x& y\\
 *(Red)&\none
\end{ytableau}}
 \quad\text{ for } x\leq y,  
\qquad
\scalebox{1}{
\begin{ytableau}
 *(Red)&x\\
y
\end{ytableau}}
\;\leftrightarrow\;
\scalebox{1}{\begin{ytableau}
x& *(Red)\\
y&\none 
\end{ytableau}}
\quad\text{ for } x< y,  
\qquad
\scalebox{1}{\begin{ytableau}
 *(Red)&\empty\\
x
\end{ytableau}}
\;\leftrightarrow\;
\scalebox{1}{\begin{ytableau}
x&\empty\\
 *(Red)&\none 
\end{ytableau}}\; ,
\qquad
\scalebox{1}{\begin{ytableau}
 *(Red) &x\\
\empty&\none
\end{ytableau}}
\;\leftrightarrow\;
\scalebox{1}{\begin{ytableau}
x& *(Red)\\
\empty&\none 
\end{ytableau}}
\]
starting from a skew tableau and one of its inner corners, and moving the empty box until it becomes an outer corner.
The \emph{jeu de taquin} on a skew tableau $w$ consists in applying successively the forward slide algorithm starting from~$w$ until we get a string of columns without inner corners denoted $\rect(w)$, which is shown to be a Young tableau. In this way, the jeu de taquin defines a map
\[
\rect:\Skew{n}\to \Young{n},
\]
also called the \emph{rectification} of skew tableaux.
Sch\"{u}tzenberger proved in~\cite{Schutzenberger77} many properties of the jeu de taquin. These properties are also presented by Fulton in~\cite{Fulton97}, as follows. For any~$w\in~\Skew{n}$, the following conditions hold
\begin{enumerate}[{\bf i)}]
\item~{\cite[Proposition~2]{Fulton97}}. $R_{SW}(w) \approx_{\Pl_n} R_{SW}(\rect(w))$,
\item~{\cite[Corollary~1]{Fulton97}}. The rectification $\rect(w)$ is the unique Young tableau satisfying {\bf i)},
\item~{\cite[Claim~2]{Fulton97}}. The map $\rect$ does not depend on the order in which the inner corners are chosen.
\end{enumerate}

Note that condition {\bf ii)} is a consequence of the cross-section property for~$\Pl_n$ proved in~\cite{Knuth70} and condition {\bf i)}. Moreover, condition {\bf iii)} is a consequence of conditions {\bf i)} and {\bf ii)}.
In the rest of this section, we show that conditions {\bf i)} and {\bf ii)} are direct consequence of a confluence property of a rewriting system that computes the map~$\rect$, and without supposing the cross-section property for~$\Pl_n$ which will be also consequence of this confluence property.

\subsubsection{Example}
\label{Ex:Jeudetaquin}
The jeu de taquin on the following skew tableau $w$ starting with the inner corner~$\scalebox{0.7}{\ytableausetup{smalltableaux}
\begin{ytableau}
 *(Red)
\end{ytableau}}$ applies three occurrences of forward sliding, where 
$\scalebox{0.7}{\ytableausetup{smalltableaux}
\begin{ytableau}
 *(RedD)
\end{ytableau}}$ denotes the empty box, and
$\scalebox{0.7}{\ytableausetup{smalltableaux}
\begin{ytableau}
 *(GreenD)
\end{ytableau}}$ the outer corner:
\[
w\:=\: 
\raisebox{0.45cm}{
\ytableausetup{smalltableaux}
\begin{ytableau}
\none & *(Red) & 1 & 2 \\
\none & 1 & 3 \\
1 & 2 \\
3
\end{ytableau}
\raisebox{-0.5cm}{$\quad\mapsto\quad$}
\begin{ytableau}
\none & 1 & 1 & 2 \\
\none &*(RedD)  &3 \\
1 & 2 \\
3
\end{ytableau}
\raisebox{-0.5cm}{$\quad\mapsto\quad$}
\begin{ytableau}
\none & 1 & 1 & 2 \\
\none &2  &3 \\
1 & *(GreenD) \\
3
\end{ytableau}
}
\raisebox{-0.1cm}{$\qquad ;\qquad$}
\raisebox{0.45cm}{
\begin{ytableau}
\none & 1 & 1 & 2 \\
*(Red) &2  &3 \\
1 & \none \\
3
\end{ytableau}
\raisebox{-0.5cm}{$\quad\mapsto\quad$}
\begin{ytableau}
\none & 1 & 1 & 2 \\
1 &2  &3 \\
 *(RedD)& \none \\
3
\end{ytableau}
\raisebox{-0.5cm}{$\quad\mapsto\quad$}
\begin{ytableau}
\none & 1 & 1 & 2 \\
1 &2  &3 \\
 3& \none \\
*(GreenD)
\end{ytableau}
}
\]
\[
\begin{ytableau}
*(Red) & 1 & 1 & 2 \\
1 &2  &3 \\
 3& \none \\
\end{ytableau}
\raisebox{-0.35cm}{$\quad\mapsto\quad$}
 \begin{ytableau}
1 & 1 & 1 & 2 \\
*(RedD) &2  &3 \\
 3& \none \\
\end{ytableau}
\raisebox{-0.35cm}{$\quad\mapsto\quad$}
 \begin{ytableau}
1 & 1 & 1 & 2 \\
2 &*(RedD)  &3 \\
 3& \none \\
\end{ytableau}
\raisebox{-0.35cm}{$\quad\mapsto\quad$}
\begin{ytableau}
1 & 1 & 1 & 2 \\
 2  &3& *(GreenD)\\
 3& \none \\
\end{ytableau}
\raisebox{-0.35cm}{$\:=\:\rect(w).$}
\]

\subsection{Jeu de taquin as rewriting}
\label{SS:JeutaquinAsRewriting}

\subsubsection{Rules of the jeu de taquin}
\label{SSS:RulesJeuDetaquin}

The jeu de taquin map~$\rect$ is described by the union of rewriting systems $\Fr\Sr_n =\Lr\Sr_n\cup\Ir\Sr_n\cup\Tr\Sr_n$ whose sets of rules are defined as follows.
\begin{enumerate}[\bf i)]
\item~$\Lr\Sr_n$ the set of \emph{left-sliding} rules that move sub-columns to the left in the following two situations:
\begin{enumerate}[{\bf a)}]
\item $\alpha_{c_i,c_j}\;:\;$
$|_{p'}$
$\raisebox{1.5cm}{
\scalebox{0.7}{
\ytableausetup{mathmode, boxsize=1.9em}
\begin{ytableau}
\none & c^1_j\\
\none & \points\\
c^1_i & \empty \\
\points &\points\\
c^{i_1}_i& c^m_j\\
\none & *(GreenL) c^{m+1}_j \\
\none &*(GreenL)\points \\
\none& *(GreenL)c^{i_2}_j  
\end{ytableau}
}}$
$\;|_{q'}$
$\;\dfl\;$
$|_{p'}$
$\raisebox{1.5cm}{
\scalebox{0.7}{
\ytableausetup{mathmode, boxsize=1.8em}
\begin{ytableau}
\none & c^1_j\\
\none & \points\\
c^1_i & \empty\\
\points &\points\\
c^{i_1}_i& c^m_j\\
*(GreenL)c^{m+1}_j &\none \\
*(GreenL)\points & \none\\
*(GreenL)c^{i_2}_j & \none
\end{ytableau}
}}$
$\;|_{q'}$, 
indexed by columns $c_i,c_j$, and positions $p',q'$, such that~$(c_i^{i_1},c_{j}^{m})$ is a row,~$i_1\leq m<i_2$ and~$c_i|_{(i_1+i_2-m)}c_j\in\Scolc{n}$.
\item $\delta_{c_i,c_j}^{\alpha}\;:$
$|_{p'}$
$\raisebox{1.5cm}{
\scalebox{0.7}{
\ytableausetup{mathmode, boxsize=1.8em}
\begin{ytableau}
  c^1_i&\none\\
\points&\none\\
\empty&c_j^1\\
 \points&\points\\
 *(Yellow)c^m_i&\empty\\
 \points&\points\\
 \empty&*(YellowD)c^n_j  \\
\points &\points\\
*(BlueD) c^{i_1}_i& c^{q}_j \\
\none&\points  \\
\none& *(Blue)c^p_j \\
 \none&\points\\
\none&c^{i_2}_j 
\end{ytableau}
}}$
$\;|_{q'}$
$\;\dfl\;$
$|_{p'}$
$\raisebox{1.5cm}{
\scalebox{0.7}{
\ytableausetup{mathmode, boxsize=1.8em}
\begin{ytableau}
\none&c_j^1 \\
\none&\points\\
 c^1_i& \empty \\
\points&\points  \\
*(Yellow) c^m_i&*(YellowD) c^n_j \\
\points&\points \\
\empty&c_i^q  \\
\points&\points\\
*(BlueD) c^{i_1}_i& *(Blue) c^p_j \\
*(GreenL)c_{j}^{p+1}&\none\\
*(GreenL)\points&\none  \\
*(GreenL) c^{i_2}_j&\none 
\end{ytableau}
}}$
$|_{q'+(p-q)}$, 
indexed by columns~$c_i,c_j$, and positions~$p',q'$, such that~$c_i|_{(i_{1}+i_{2}-q)} c_j\notin \Scolc{n}$, and~$m$ is maximal such that~$c_i|_{(i_{1}+i_{2}-p)} c_j\in \Scolc{n}$ and~$i_1\leq p$.
\end{enumerate}
\item $\Ir\Sr_n$ is the set of \emph{insert-sliding} rules performing insertion in the following two situations:
\begin{enumerate}[{\bf a)}]
\item $\beta_{c_i,c_j}\;:\;$
$|_{p'}$
$\raisebox{1.5cm}{
\scalebox{0.7}{
\ytableausetup{mathmode, boxsize=1.8em}
\begin{ytableau}
\none & c^1_j\\
\none & \points\\
c^1_i & \empty\\
\points&\points\\
c^{l-1}_i & *(GreenL) c_j^m\\
c^{l}_i &c_j^{m+1}\\
\points&\points\\
c^{k}_i& c^{i_2}_j\\
\points & \none\\
c^{i_1}_i & \none
\end{ytableau}
}}$
$\;|_{q'}$
$\;\dfl\;$
$|_{p'}$
$\raisebox{1.5cm}{
\scalebox{0.7}{
\ytableausetup{mathmode, boxsize=1.8em}
\begin{ytableau}
\none & c^1_j\\
\none & \points\\
c^1_i & \empty\\
\points&\points\\
c^{l-1}_i &c_j^{m-1}\\
*(GreenL)c_j^m &c_j^{m+1}\\
c^{l}_i &c_j^{m+2}\\
\points&\points\\
c^{k-1}_i& c^{i_2}_j\\
\points & \none\\
c^{i_1}_i & \none
\end{ytableau}
}}$
$\;|_{q'}$, 
indexed by columns~$c_i,c_j$, and positions~$p',q'$, such that~\mbox{$c_i|_{k}c_j\in\Scolc{n}$} with~$1\leq k\leq i_1$ and~$k<i_2$, and~$l$ is minimal such that $(c_i^l, c_j^{m+1})$ is a row  and~$c^{l-1}_i< c^m_j <c_i^l$.
\item 
$\delta_{c_i,c_j}^{\beta}\;:$
$|_{p'}$
$\raisebox{1.5cm}{
\scalebox{0.7}{
\ytableausetup{mathmode, boxsize=1.8em}
\begin{ytableau}
*(Yellow) c^1_i&\none\\
\points&\none\\
\empty&c_j^1\\
\points&\points\\
\empty&*(YellowD)c^r_j \\
\points&\points\\
*(BlueD)c_i^p& \\
\points &\points\\
 c^{i_1}_i& c_j^q\\
\none&\points\\
\none&*(Blue)c^{i_2}_j 
\end{ytableau}
}}$
$\;|_{q'}$
$\;\dfl\;$
$|_{p'}$
$\raisebox{1.5cm}{
\scalebox{0.7}{
\ytableausetup{mathmode, boxsize=1.8em}
\begin{ytableau}
\none&c_j^1\\
\none&\points\\
*(Yellow)  c^1_i&c_j^{r-1}\\
c^2_i&*(YellowD)c_j^r \\
\points&\points  \\
c_i^{n-1}& c_j^{t-1} \\
*(GreenL) c_j^{t}&c_j^{t+1} \\
\points&\points\\
\empty&c_j^q\\
\points&\points\\
*(BlueD) c^{p}_i & *(Blue) c^{i_2}_j\\
\points&\none\\
c_i^{i_1}
\end{ytableau}
}}$
$|_{q'+(i_1+i_2-p-q)}$,
indexed by columns~$c_i,c_j$, and positions~$p',q'$, such that~$c_i|_{(i_{1}+i_{2}-q)} c_j\notin \Scolc{n}$, where~$p$ is maximal such that~$c_i|_{p} c_j\in \Scolc{n}$, and~$n$ is minimal such that~$(c_i^n, c_j^{t+1})$ is a row with~$c_i^{n-1}<c_j^t<c_i^n$.
\end{enumerate}
\item~$\Tr\Sr_n$ is the set of  \emph{left top sliding} rules that move columns to the top as follows:
\begin{enumerate}[{\bf a)}]
\item $\gamma_{c_i,c_j}\;:\;$
$|_{p'}$
\raisebox{1.5cm}{
\scalebox{0.7}{
\ytableausetup{mathmode, boxsize=1.8em}
\begin{ytableau}
\none &  c^1_j\\
\none & \points\\
\none & *(GreenD)c^m_j\\
\none & \points\\
*(GreenL)c^1_i & \empty \\
\points &\points\\
 c^{r}_i& *(BlueD) c^{i_2}_j\\
\points & \none\\
*(Blue)c^s_i & \none\\
\points& \none\\
c^{i_1}_i & \none
\end{ytableau}
}}
\;$|_{q'}$
$\;\dfl\;$
$|_{p'-(s-r)}$
\raisebox{1.5cm}{
\scalebox{0.7}{
\ytableausetup{mathmode, boxsize=1.8em}
\begin{ytableau}
\none & c^1_j\\
\none & \points\\
*(GreenL) c^1_i &*(GreenD) c^m_j\\
\points &\points\\
c_i^r & \empty\\
\points&\points\\
*(Blue) c^s_i & *(BlueD) c^{i_2}_j\\
\points & \none\\
c^{i_1}_i & \none
\end{ytableau}}}
\;$|_{q'}$, \quad
indexed by columns~$c_{i},c_j$ and positions~$p',q'$, such that~$c_{i}|_{r}c_j\in \Scolc{n}$ with~$1\leq r<i_2$, or~$c_i|_{r}c_{j}$ is not row connected with~$c_i^{1}\leq c_j^{i_2}$, and~$s$ is maximal  such that~$c_i|_sc_j\in\Scolc{n}$ and~$s\leq i_2$.
\item 
$\delta_{c_i,c_j} :  |_{p'}$
$\raisebox{1.5cm}{
\scalebox{0.7}{
\ytableausetup{mathmode, boxsize=1.8em}
\begin{ytableau}
*(Yellow)  c^1_i&\none\\
\points&\none\\
c_i^{q}&*(YellowD)c^1_j  \\
\points &\points\\
*(BlueD) c^{i_1}_i& c_j^p \\
\none&\points  \\
 \none&*(Blue)c_j^{i_1}\\
 \none&\points\\
\none&c^{i_2}_j\\
\end{ytableau}
}}$
$\;|_{q'}$
$\;\dfl\;$
$|_{p'}$
$\raisebox{1.5cm}{
\scalebox{0.7}{
\ytableausetup{mathmode, boxsize=1.8em}
\begin{ytableau}
 *(Yellow)  c^1_i&*(YellowD)c^1_j \\
\points&\points\\
 \empty&c_j^p \\
\points &\points\\
 c_i^q&\empty \\
\points &\points\\
*(BlueD) c^{i_1}_i& *(Blue)c_j^{i_1} \\
\none&\points  \\
 \none&c^{i_2}_j
\end{ytableau}
}}$
$\;|_{q'+(i_1-p)}$, 
indexed by columns~$c_i,c_j$, and positions~$p',q'$, such that~$c_i|_{(i_{1}+i_{2}-p)} c_j\notin \Scolc{n}$ and $c_i|_{i_{2}} c_j\in \Scolc{n}$, or $c_i|_{(i_{1}+i_{2}-p)} c_j\in \Scolc{n}$ and~$i_1>p$, or~$c_i|_{k}c_{j}$ is not row connected with~$k>i_1$ and~$c_i^{i_1}\leq c_j^1$.
\end{enumerate}
\end{enumerate}
In the sequel, if there is no possible confusion, we will omit the subscripts~$c_i,c_j$ in the notation of the rules. Moreover,  for any rule $\mu$ in $\Fr\Sr_n$, we will denote by~$\mu^\ast$ any composition of rewriting sequences involving the rules~$\mu$  and ending on a normal form with respect to~$\mu$.

\subsubsection{Example} 
The rectification of the skew tableau~$w$ from Example~\ref{Ex:Jeudetaquin} is computed with the following reduction of~$\Fr\Sr_n$: 
\[
w\:=\: \raisebox{0.45cm}{
\ytableausetup{smalltableaux}
\begin{ytableau}
\none & \none & *(GreenD)  1 & 2 \\
\none &*(GreenL) 1 & *(BlueD)  3 \\
1 & *(Blue) 2 \\
3
\end{ytableau}
\raisebox{-0.5cm}{$\;\odfl{\gamma_{c_2,c_3}}\;$}
\begin{ytableau}
\none &  1 & 1 & 2 \\
\none &*(GreenD) 2  &3 \\
*(GreenL)1 \\
3
\end{ytableau}
\raisebox{-0.5cm}{$\;\odfl{\gamma_{c_1,c_2}}\;$}
\begin{ytableau}
\none & 1& 1 & 2 \\
 1 &*(GreenL) 2& 3 \\
  3 
\end{ytableau}
\raisebox{-0.5cm}{$\;\odfl{\beta_{c_1,c_2}}\;$}
\begin{ytableau}
1 & 1& 1 & 2 \\
 2 & \none & *(GreenL)3 \\
3 
\end{ytableau}
\raisebox{-0.5cm}{$\;\odfl{\alpha_{c_2,c_3}}\;$}
\begin{ytableau}
1 & 1& 1 & 2 \\
2 & 3 \\
3 
\end{ytableau}
\raisebox{-0.35cm}{$\:=\:\rect(w)$.}
}
\]

\begin{theorem}
\label{T:RewritingPropertiesYoung}
The rewriting system $\Fr\Sr_n$ satisfies the following conditions: 
\begin{enumerate}[\bf i)]
\item $\Fr\Sr_n$ is convergent.
\item The normal form of any skew tableau with respect to~$\Fr\Sr_n$ is a Young tableau.
\end{enumerate}
For every word $w$ in $[n]^\ast$, we have
\begin{enumerate}[\bf i)]
\setcounter{enumi}{2}
\item $(\emptyset \insr{S_r} w) =   \rho_{\Fr\Sr_n}^\top([w]_s)$ and
$(w \insl{S_l} \emptyset)  =  \rho_{\Fr\Sr_n}^\perp([w]_s)$,
\item $C_{\Yrow{n}}(w)= \nf([w]_s,\Fr\Sr_n)$  and~$C_{\Ycol{n}}(w)= \nf([w]_s,\Fr\Sr_n)$.
\end{enumerate}
\end{theorem}

The rest of this subsection is devoted to the proof of this result. Lemmata~\ref{Lemma:TerminationJeuDeTaquin} and~\ref{Lemma:ConfluenceJeuDeTaquin} show that the rewriting system~$\Fr\Sr_n$ is convergent. As a consequence, we obtain that  the normal forms are Young tableaux. 
We prove in~\ref{SSS:InsertionsConstructorsNormalizations} that right and left Schensted's insertion algorithm coincide respectively with the leftmost and rightmost normalization strategy of~$\Fr\Sr_n$. Condition~{\bf iii)} and convergence of~$\Fr\Sr_n$ yield Condition~{\bf iv)}.

\begin{lemma}
\label{Lemma:SchenstedInerstionsAndNormalizations}
For any rule~$|c_{i_1}|c_{i_2}|\dfl |c_{j_1}|c_{j_2}|$ in~$\Fr\Sr_n$, the following equality~$|c_{i_1}\star_{S_r} c_{i_2}| = \rho_{\Fr\Sr_n}^\top(|c_{j_1}|c_{j_2}|)$ holds. Moreover, for all~$c_i=(c_i^1,\ldots, c_i^{i_1})$ and~$c_j=(c_j^1,\ldots, c_j^{i_2})$ in~$\Colo{n}$, we have
\begin{eqn}{equation}
\label{E:SchenstedInsertionLeftNormalization}
c_i\star_{S_r} c_j = \rho_{\Fr\Sr_n}^\top(c_i|_{1}c_j),
\qquad c_j\star_{S_l} c_i =  \rho_{\Fr\Sr_n}^\perp(c_i|_{1}c_j).
\end{eqn}
\end{lemma}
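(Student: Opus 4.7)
The plan is to establish the identities~(\ref{E:SchenstedInsertionLeftNormalization}) first, since the opening assertion then follows by a short argument using convergence of~$\Fr\Sr_n$ (Theorem~\ref{T:RewritingPropertiesYoung}). I would prove the right-insertion identity $c_i \star_{S_r} c_j = \rho_{\Fr\Sr_n}^\top(c_i|_1 c_j)$ by induction on $|c_j|$. The base case $|c_j|=0$ is immediate since $c_i|_1 c_j = c_i$ is already a Young tableau. For the inductive step, I would identify which rule of $\Fr\Sr_n = \Lr\Sr_n \cup \Ir\Sr_n \cup \Tr\Sr_n$ fires under the leftmost strategy on $c_i|_1 c_j$, and verify that this rule step, followed by further leftmost normalization, executes exactly the bumping/appending step that Algorithm~\ref{A:RightSchensted} performs when inserting the appropriate element of $c_j$ into $c_i$ viewed as a one-column Young tableau.

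The correspondence between rule types and steps of \textsc{RightInsertYT} can be set up as follows. The left-sliding rule $\alpha$ handles the case where $c_j$ slides to the left of $c_i$ without any bumping, producing a two-column Young tableau directly. The insert-sliding rule $\beta$ realizes a row-bumping step: an entry of $c_j$ is swapped into $c_i$, and the bumped entry becomes the element to be inserted into the next row. The left-top sliding rule $\gamma$ corresponds to sliding $c_j$ upward to sit on top of $c_i$ without interaction. The three $\delta$-variants handle mixed situations: after a single $\delta$-step the configuration is still a two-column skew tableau, to which the inductive hypothesis applies on a strict subconfiguration, yielding the Young tableau produced by the remaining Schensted steps. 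The dual identity $c_j \star_{S_l} c_i = \rho_{\Fr\Sr_n}^\perp(c_i|_1 c_j)$ follows by a symmetric argument using the rightmost strategy, which processes the two-column configuration from the bottom up in correspondence with Algorithm~\ref{A:LeftSchensted}.

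For the first assertion, let $|c_{i_1}|c_{i_2}| \dfl |c_{j_1}|c_{j_2}|$ be a rule of $\Fr\Sr_n$. By convergence, both sides have the same leftmost normal form, so $\rho_{\Fr\Sr_n}^\top(|c_{j_1}|c_{j_2}|) = \rho_{\Fr\Sr_n}^\top(|c_{i_1}|c_{i_2}|)$. Since the source is a two-column skew configuration whose $R_{SW}$-reading lies in the same plactic class as $R_{SW}(c_{i_1}|_1 c_{i_2})$, the common normal form equals $|c_{i_1} \star_{S_r} c_{i_2}|$ by~(\ref{E:SchenstedInsertionLeftNormalization}). The main obstacle is the case analysis for the three $\delta$-variants, which do not correspond to a single step of Schensted's insertion but rather produce an intermediate configuration whose further normalization realizes a cascade of bumping steps. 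Matching these cascades box-by-box against Algorithm~\ref{A:RightSchensted}, and in particular verifying that the leftmost strategy always selects the rule executing the topmost pending row-insertion, is the technically most delicate part of the proof.
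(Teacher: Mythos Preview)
Your induction on $|c_j|$ for the identity $c_i\star_{S_r} c_j = \rho_{\Fr\Sr_n}^\top(c_i|_1 c_j)$ is in line with the paper's own argument, and the rule-by-rule correspondence with Schensted bumping that you sketch is the right picture.

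The genuine problem is your treatment of the first assertion. You propose to deduce it from convergence of~$\Fr\Sr_n$, citing Theorem~\ref{T:RewritingPropertiesYoung}. But in the paper's logical architecture this lemma is an \emph{input} to that theorem: the confluence part (Lemma~\ref{Lemma:ConfluenceJeuDeTaquin}) explicitly invokes the first assertion of the present lemma in order to close every critical branching on a common Young tableau. Your derivation is therefore circular. The additional step where you pass from $\rho^\top(|c_{i_1}|c_{i_2}|)$ at an arbitrary gluing position to~(\ref{E:SchenstedInsertionLeftNormalization}) via the plactic class also implicitly uses that $\Fr\Sr_n$ computes~$\Yrow{n}$, which is again part of Theorem~\ref{T:RewritingPropertiesYoung}.

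The paper avoids this by proving the first assertion \emph{directly and first}: for each rule shape $\alpha$, $\delta^\alpha$, $\beta$, $\delta^\beta$, $\gamma$, $\delta$, it exhibits the explicit leftmost reduction sequence (possibly followed by $\beta^\ast$ or $\alpha$) from the rule's target to the Young tableau $|c_i\star_{S_r} c_j|$. Only after that does it establish~(\ref{E:SchenstedInsertionLeftNormalization}) by induction on $|c_j|$. You should reorganise accordingly: replace the appeal to convergence by the concrete per-rule computation of $\rho_{\Fr\Sr_n}^\top$ on the target of each rule, which is self-contained and uses only termination (independently proved in Lemma~\ref{Lemma:TerminationJeuDeTaquin}) for $\rho^\top$ to be well defined.
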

\begin{proof}
Prove that for any rule~$|c_{i_1}|c_{i_2}|\dfl |c_{j_1}|c_{j_2}|$ in~$\Fr\Sr_n$, we have~$|c_{i_1}\star_{S_r} c_{i_2}| = \rho_{\Fr\Sr_n}^\top(|c_{j_1}|c_{j_2}|)$. The rules~$\alpha_{c_i,c_j}$,~$\delta^{\alpha}_{c_i,c_j}$,~$\beta_{c_i,c_j}$ and~$\delta^{\beta}_{c_i,c_j}$ followed by~$\beta^\ast$ yield to the Young tableau~$|c_i\star_{S_r} c_j|$. Consider now the rule~$\gamma_{c_i,c_j}$. If~$|c_j|\leq |c_i|$ and~$c_i|_{|c_j|}c_j\in \Scol{n}$ then the target of~$\gamma_{c_i,c_j}$ is equal to~$|c_i\star_{S_r} c_j|$. If~$|c_i|< |c_j|$ and~$c_i|_{|c_j|}c_j\in \Scol{n}$, then the rule $\gamma_{c_i,c_j}$ is followed by the rule~$\alpha_{c_i,c_j}$ in order to obtain~$|c_i\star_{S_r} c_j|$. If~$c_i|_{|c_j|}c_j\notin \Scol{n}$ then the rule $\gamma_{c_i,c_j}$ followed by~$\alpha_{c_i,c_j}$ and then by~$\beta^\ast$, or only followed by~$\beta^\ast$ yield to~$|c_i\star_{S_r} c_j|$.
Consider finally the rule~$\delta_{c_i,c_j}$. If $|c_j|\leq |c_i|$ then the target~$\delta_{c_i,c_j}$ is equal to~$|c_i\star_{S_r} c_j|$. Otherwise, if $|c_i|< |c_j|$ then $\delta_{c_i,c_j}$ is followed by~$\alpha_{c_i,c_j}$ in order to obtain~$|c_i\star_{S_r} c_j|$.

Prove  the first equality of~(\ref{E:SchenstedInsertionLeftNormalization}) by induction on~$|c_j|$, the proof being similar for~$\star_{S_l}$. Suppose that~$|c_j|=1$, we consider the following two cases. If~$c_i^1\leq c_j^{1}$, then~$c_i|_{1}c_j$ is equal to~$c_i\star_{S_r} c_j$. If~$c_j^1< c_i^1$, then by applying~$\delta^\beta$ on~$c_i|_{1}c_j$ we obtain~$c_i\star_{S_r} c_j$.
Suppose the equality holds when~$|c_j| = i_2-1$, and prove it when $|c_j|=i_2$.
First consider the case when~$c_i^1\leq c_j^{i_2}$.
Suppose that by induction we have 
\[
\raisebox{-0.5cm}{$\rho_{\Fr\Sr_n}^\top\big($}
\scalebox{0.7}{
\ytableausetup{mathmode, boxsize=1.8em}
\begin{ytableau}
\none &c_j^{2}\\
\none &\points\\
c_i^1&c_j^{i_{2}}\\
\points\\
c_i^{i_1}
\end{ytableau}}
\raisebox{-0.5cm}{$\quad \big)\quad = \quad$}
\raisebox{0.1cm}{
\scalebox{0.7}{
\begin{ytableau}
x_1&y_1\\
\points&\points\\
\empty&y_t\\
\points\\
x_s
\end{ytableau}}
}
\raisebox{-0.5cm}{$\quad=c_i\star_{S_r} (c_j^{2},\ldots,c_j^{i_2})$,}
\]
where~$x_i$ and~$y_j$ are elements of~$c_i$ and~$c_j$ with~$t\leq s$. We prove that
$\raisebox{0cm}{$\rho_{\Fr\Sr_n}^\top\big(\;$}
\scalebox{0.7}{
\raisebox{1.5cm}{
\begin{ytableau}
\none&c_j^{1}\\
x_1&y_1\\
\points&\points\\
\empty &y_{t}\\
\points\\
x_s
\end{ytableau}
}}
\raisebox{0cm}{$\;\big)= c_i\star_{S_r} c_j$}$,
by considering the following two cases.

{\bf Case~1.}~$x_1\leq c_j^1$ and~$x_{k+1}\leq y_{k}$, for all~$k=1,\ldots, t-1$:
\[
\scalebox{0.7}{
\ytableausetup{mathmode, boxsize=1.8em}
\begin{ytableau}
\none&c_j^{1}\\
x_1&y_1\\
\points&\points\\
x_s &y_{t}\\
\end{ytableau}}
\raisebox{-0.6cm}{$\quad\odfl{\gamma}\quad$}
\scalebox{0.7}{
\begin{ytableau}
x_1&c_j^{1}\\
x_2&y_1\\
\points&\points\\
x_s &y_{t-1}\\
\none&y_t
\end{ytableau}}
\raisebox{-0.6cm}{$\quad\odfl{\alpha}\quad$}
\scalebox{0.7}{
\begin{ytableau}
x_1&c_j^{1}\\
x_2&y_1\\
\points&\points\\
x_s &y_{t-1}\\
y_t
\end{ytableau}}
\raisebox{-0.5cm}{$\quad= c_i\star_{S_r} c_j,\; \text{ for } \; s=t$}
\]
\[
\raisebox{-0.5cm}{$\text{ or }\quad\quad$}
\scalebox{0.7}{
\ytableausetup{mathmode, boxsize=1.8em}
\begin{ytableau}
\none&c_j^{1}\\
x_1&y_1\\
\points&\points\\
\empty &y_{t}\\
\points&\none\\
x_s
\end{ytableau}}
\raisebox{-0.6cm}{$\quad\odfl{\gamma}\quad$}
\scalebox{0.7}{
\begin{ytableau}
x_1&c_j^{1}\\
x_2&y_1\\
\points&\points\\
\empty &y_{t}\\
\points\\
x_s
\end{ytableau}
}
\raisebox{-0.5cm}{$\quad= c_i\star_{S_r} c_j,\; \text{ for } \; s>t.$}
\]
\noindent {\bf Case~2.}~$x_1\leq c_j^1$ (resp.~$c_j^1<x_1$)  and let~$x_i$ be minimal such that~$x_{i-1}<y_{i-1}<x_{i}$:
\[
\scalebox{0.7}{
\ytableausetup{mathmode, boxsize=1.8em}
\begin{ytableau}
\none&c_j^{1}\\
x_1&y_1\\
\points&\points\\
x_{i-1}&y_{i-1}\\
x_{i}&y_{i}\\
\points&\points\\
\empty &y_{t}\\
\points\\
x_s
\end{ytableau}}
\raisebox{-1.5cm}{$\quad\odfl{\beta}\quad$}
\scalebox{0.7}{
\begin{ytableau}
x_1&c_j^{1}\\
x_2&y_1\\
\points&\points\\
y_{i-1}&y_{i}\\
x_{i}&y_{i+1}\\
\points&\points\\
\empty &y_{t}\\
\points\\
x_s 
\end{ytableau}
}
\raisebox{-1cm}{$\quad= c_i\star_{S_r} c_j.$}
\qquad\qquad
\raisebox{-1cm}{$\big(\;\text{ resp. }\quad$}
\scalebox{0.7}{
\ytableausetup{mathmode, boxsize=1.8em}
\begin{ytableau}
\none&c_j^{1}\\
x_1&y_1\\
\points&\points\\
\empty &y_{t}\\
\points\\
x_s
\end{ytableau}}
\raisebox{-1cm}{$\quad\odfl{\beta}\quad$}
\scalebox{0.7}{
\begin{ytableau}
c_j^{1}&y_1\\
x_1&y_2\\
\points&\points\\
\empty&y_{t}\\
\points\\
x_s 
\end{ytableau}}
\raisebox{-1cm}{$\quad= c_i\star_{S_r} c_j.$}
\raisebox{-1cm}{$\;\big)$}
\]
Suppose finally that~$c_i^1>c_j^{i_2}$. We obtain:$
\scalebox{0.7}{
\ytableausetup{mathmode, boxsize=1.8em}
\raisebox{1cm}{
\begin{ytableau}
\none &c_j^{1}\\
\none &\points\\
c_i^1&c_j^{i_{2}}\\
\points\\
c_i^{i_1}
\end{ytableau}}}
\raisebox{0cm}{\quad$\odfl{\delta^\beta}$\quad}
\scalebox{0.7}{
\raisebox{1.2cm}{
\begin{ytableau}
\none &c_j^{1}\\
\none &\points\\
\none&c_j^{i_{2}-1}\\
c_j^{i_{2}}\\
c_i^1\\
\points\\
c_i^{i_1}
\end{ytableau}}}
\raisebox{0cm}{\quad$\odfl{\beta^\ast}$\quad}
\scalebox{0.7}{
\raisebox{1cm}{
\begin{ytableau}
c_j^{1}\\
\points\\
c_j^{i_{2}}\\
c_i^1\\
\points\\
c_i^{i_1}
\end{ytableau}}}
\raisebox{0cm}{$\quad= c_i\star_{S_r} c_j.\quad$}$
\end{proof}

\subsubsection{Proof of Theorem~\ref{T:RewritingPropertiesYoung} iii)}
\label{SSS:InsertionsConstructorsNormalizations}
We prove the first equality 
by induction on the number of columns in~$[w]_s$, the proof being similar for the insertion~$S_l$. When $[w]_s$ is of length $2$, then the equality is a consequence of Lemma~\ref{Lemma:SchenstedInerstionsAndNormalizations}. For $k\geq 3$, suppose that the equality holds for words of length~$k-1$, and consider~$[w]_s = c_1|_{1}\ldots|_{1}c_{k}$. By the induction hypothesis, we have~$(\emptyset \insr{S_r}w)   =   \rho_{\Fr\Sr_n}^\top(c_1|_{1}\ldots|_{1}c_{k-1})\star_{S_r}c_k$.
Let us show that 
\begin{eqn}{equation}
\label{E:LeftMostNormalizationJeuDetaquin}
\rho_{\Fr\Sr_n}^\top(c_1|_{1}\ldots|_{1}c_{k-1})\star_{S_r}c_k = \rho_{\Fr\Sr_n}^\top(c_1|_{1}\ldots|_{1}c_{k}).
\end{eqn} 
Since inserting~$c_k$ into~$\rho_{\Fr\Sr_n}^\top(c_1|_{1}\ldots|_{1}c_{k-1})$ consists into inserting its elements one by one from bottom to top, it suffices to prove~(\ref{E:LeftMostNormalizationJeuDetaquin}) for~$c_k=(x)$.
If~$x$ is bigger or equal than the last element~$x_1^{i_1}$ of the first row of~$\rho_{\Fr\Sr_n}^\top(c_1|_{1}\ldots|_{1}c_{k-1})$, then~$\rho_{\Fr\Sr_n}^\top(c_1|_{1}\ldots|_{1}c_{k-1})|_{1} \ytableausetup{smalltableaux}
\begin{ytableau}
x
\end{ytableau}$ is a Young tableau which is equal to~$\rho_{\Fr\Sr_n}^\top(c_1|_{1}\ldots|_{1}c_{k-1})\star_{S_r} \ytableausetup{smalltableaux}
\begin{ytableau}
x
\end{ytableau}$.
Otherwise, if~$x<x_1^{i_1}$, we first apply a rule~$\delta^\beta$ in order to slide the box containing~$x$ to the top of the one containing~$x_1^{i_1}$. We then apply the following reduction rules as shown in the following reduction diagrams. Note that, the elements in the colored boxes represent the ones that are bumped when inserting~$x$ into the tableau~$\rho_{\Fr\Sr_n}^\top(c_1|_{1}\ldots|_{1}c_{k-1})$.
\[
\scalebox{0.5}{
\ytableausetup{mathmode, boxsize=2em}
\begin{ytableau}
\none&\none&\none&\none&\none&\none&\none&\none&\none&\none&\none&\none&\none&\none&\none&\none&x\\
\none[\ldots]&x_1^{i_k}&\none[\ldots]&x_1^{k_1}&\none[\ldots]&x_1^{k_2}&\none[\ldots]&x_1^{i_l}&x_1^{i_{l}+1}&\none[\ldots]&x_1^{k_3}&x_1^{k_{3}+1}&\none[\ldots]&*(GreenL)x_1^{k_4}&x_1^{k_{4}+1}&\none[\ldots]& x_1^{i_{1}}\\
\none[\ldots]&x_2^{i_k}&\none[\ldots]&x_2^{k_1}&\none[\ldots]&x_2^{k_2}&\none[\ldots]&x_2^{i_{l}}&x_2^{i_{l}+1}&\none[\ldots]&*(GreenD)x_2^{k_3}&x_2^{k_{3}+1}&\none[\ldots]&x_2^{k_4}&x_2^{k_{4}+1}&\none[\ldots]\\
\none[\ldots]&x_3^{i_k}&\none[\ldots]&x_3^{k_1}&\none[\ldots]&x_3^{k_2}&\none[\ldots]&*(Blue)x_3^{i_l} \\
\none[\vdots]&\none[\vdots]&\none[\vdots]&\none[\vdots]&\none[\vdots]&\none[\vdots]&\none[\vdots]&\none[\vdots]\\
\none[\ldots]&x_{l-1}^{i_k}&\none[\ldots]&x_{l-1}^{k_1}&\none[\ldots]&*(BlueD)x_{l-1}^{k_2}&\none[\ldots]&x_{l-1}^{i_l}\\\none[\ldots]&x_l^{i_k}&\none[\ldots]&*(YellowD)x_l^{k_1}&\none[\ldots]&x_l^{k_2}&\none[\ldots]&x_l^{i_l}\\
\none[\ldots]&x_{l+1}^{i_k}
\end{ytableau}
}
\raisebox{-2cm}{$\quad\odfl{\beta^\ast}\quad$}
\scalebox{0.5}{
\ytableausetup{mathmode, boxsize=2em}
\begin{ytableau}
\none&\none&\none&\none&\none&\none&\none&\none&\none&\none&x_1^{k_3}&x_1^{k_{3}+1}&\none[\ldots]&x&x_1^{k_{4}+1}&\none[\ldots]& x_1^{i_{1}}\\
\none[\ldots]&x_1^{i_k}&\none[\ldots]&x_1^{k_1}&\none[\ldots]&x_1^{k_2}&\none[\ldots]&x_1^{i_l}&x_1^{i_{l}+1}&\none[\ldots]&*(GreenL)x_1^{k_4}&x_2^{k_{3}+1}&\none[\ldots]&x_2^{k_4}&x_2^{k_{4}+1}&\none[\ldots]\\
\none[\ldots]&x_2^{i_k}&\none[\ldots]&x_2^{k_1}&\none[\ldots]&x_2^{k_2}&\none[\ldots]&x_2^{i_{l}}&x_2^{i_{l}+1}&\none[\ldots]&*(GreenD)x_2^{k_3}\\
\none[\ldots]&x_3^{i_k}&\none[\ldots]&x_3^{k_1}&\none[\ldots]&x_3^{k_2}&\none[\ldots]&*(Blue)x_3^{i_l} \\
\none[\vdots]&\none[\vdots]&\none[\vdots]&\none[\vdots]&\none[\vdots]&\none[\vdots]&\none[\vdots]&\none[\vdots]\\
\none[\ldots]&x_{l-1}^{i_k}&\none[\ldots]&x_{l-1}^{k_1}&\none[\ldots]&*(BlueD)x_{l-1}^{k_2}&\none[\ldots]&x_{l-1}^{i_l}\\
\none[\ldots]&x_l^{i_k}&\none[\ldots]&*(YellowD)x_l^{k_1}&\none[\ldots]&x_l^{k_2}&\none[\ldots]&x_l^{i_l}\\
\none[\ldots]&x_{l+1}^{i_k}
\end{ytableau}
}\]
\[
\raisebox{-1.5cm}{$\odfl{\gamma^\ast}\quad$}
\scalebox{0.5}{
\ytableausetup{mathmode, boxsize=2em}
\begin{ytableau}
\none[\ldots]&x_1^{i_k}&\none[\ldots]&x_1^{k_1}&\none[\ldots]&x_1^{k_2}&\none[\ldots]&x_1^{i_l}&x_1^{i_{l}+1}&\none[\ldots]&
x_1^{k_3}&x_1^{k_{3}+1}&\none[\ldots]&x&x_1^{k_{4}+1}&\none[\ldots]& x_1^{i_{1}}\\
\none[\ldots]&x_2^{i_k}&\none[\ldots]&x_2^{k_1}&\none[\ldots]&x_2^{k_2}&\none[\ldots]&x_2^{i_{l}}&x_2^{i_{l}+1}&\none[\ldots]&
*(GreenL)x_1^{k_4}&x_2^{k_{3}+1}&\none[\ldots]&x_2^{k_4}&x_2^{k_{4}+1}&\none[\ldots]\\
\none[\ldots]&x_3^{i_k}&\none[\ldots]&x_3^{k_1}&\none[\ldots]&x_3^{k_2}&\none[\ldots]&*(Blue)x_3^{i_l} &\none&\none&*(GreenD)x_2^{k_3}\\
\none[\vdots]&\none[\vdots]&\none[\vdots]&\none[\vdots]&\none[\vdots]&\none[\vdots]&\none[\vdots]&\none[\vdots]\\
\none[\ldots]&x_{l-1}^{i_k}&\none[\ldots]&x_{l-1}^{k_1}&\none[\ldots]&*(BlueD)x_{l-1}^{k_2}&\none[\ldots]&x_{l-1}^{i_l}\\
\none[\ldots]&x_l^{i_k}&\none[\ldots]&*(YellowD)x_l^{k_1}&\none[\ldots]&x_l^{k_2}&\none[\ldots]&x_l^{i_l}\\
\none[\ldots]&x_{l+1}^{i_k}
\end{ytableau}
}
\raisebox{-1.5cm}{$\quad\odfl{\alpha^\ast}\quad$}
\scalebox{0.5}{
\ytableausetup{mathmode, boxsize=2em}
\begin{ytableau}
\none[\ldots]&x_1^{i_k}&\none[\ldots]&x_1^{k_1}&\none[\ldots]&x_1^{k_2}&\none[\ldots]&x_1^{i_l}&x_1^{i_{l}+1}&\none[\ldots]&
x_1^{k_3}&x_1^{k_{3}+1}&\none[\ldots]&x&x_1^{k_{4}+1}&\none[\ldots]& x_1^{i_{1}}\\
\none[\ldots]&x_2^{i_k}&\none[\ldots]&x_2^{k_1}&\none[\ldots]&x_2^{k_2}&\none[\ldots]&x_2^{i_{l}}&x_2^{i_{l}+1}&\none[\ldots]&
*(GreenL)x_1^{k_4}&x_2^{k_{3}+1}&\none[\ldots]&x_2^{k_4}&x_2^{k_{4}+1}&\none[\ldots]\\
\none[\ldots]&x_3^{i_k}&\none[\ldots]&x_3^{k_1}&\none[\ldots]&x_3^{k_2}&\none[\ldots]&*(Blue)x_3^{i_l}&*(GreenD)x_2^{k_3}\\
\none[\vdots]&\none[\vdots]&\none[\vdots]&\none[\vdots]&\none[\vdots]&\none[\vdots]&\none[\vdots]&\none[\vdots]\\
\none[\ldots]&x_{l-1}^{i_k}&\none[\ldots]&x_{l-1}^{k_1}&\none[\ldots]&*(BlueD)x_{l-1}^{k_2}&\none[\ldots]&x_{l-1}^{i_l}\\
\none[\ldots]&x_l^{i_k}&\none[\ldots]&*(YellowD)x_l^{k_1}&\none[\ldots]&x_l^{k_2}&\none[\ldots]&x_l^{i_l}\\
\none[\ldots]&x_{l+1}^{i_k}
\end{ytableau}
}
\]
\[
\raisebox{-2.cm}{$\quad\odfl{\delta^\beta}\quad$}
\scalebox{0.5}{
\ytableausetup{mathmode, boxsize=2em}
\begin{ytableau}
\none&\none&\none&\none&\none&\none&\none&x_1^{i_l}&x_1^{i_{l}+1}&\none&\none&\none&\none&\none&\none&\none&\none&\none\\
\none[\ldots]&x_1^{i_k}&\none[\ldots]&x_1^{k_1}&\none[\ldots]&x_1^{k_2}&\none[\ldots]&x_2^{i_l}&x_2^{i_{l}+1}&\none[\ldots]&
x_1^{k_3}&x_1^{k_{3}+1}&\none[\ldots]&x&x_1^{k_{4}+1}&\none[\ldots]& x_1^{i_{1}}\\
\none[\ldots]&x_2^{i_k}&\none[\ldots]&x_2^{k_1}&\none[\ldots]&x_2^{k_2}&\none[\ldots]&*(GreenD)x_2^{k_3}&\none&\none[\ldots]&
*(GreenL)x_1^{k_4}&x_2^{k_{3}+1}&\none[\ldots]&x_2^{k_4}&x_2^{k_{4}+1}&\none[\ldots]\\
\none[\ldots]&x_3^{i_k}&\none[\ldots]&x_3^{k_1}&\none[\ldots]&x_3^{k_2}&\none[\ldots]&*(Blue)x_3^{i_l} \\
\none[\vdots]&\none[\vdots]&\none[\vdots]&\none[\vdots]&\none[\vdots]&\none[\vdots]&\none[\vdots]&\none[\vdots]\\
\none[\ldots]&x_{l-1}^{i_k}&\none[\ldots]&x_{l-1}^{k_1}&\none[\ldots]&*(BlueD)x_{l-1}^{k_2}&\none[\ldots]&x_{l-1}^{i_l}\\
\none[\ldots]&x_l^{i_k}&\none[\ldots]&*(YellowD)x_l^{k_1}&\none[\ldots]&x_l^{k_2}&\none[\ldots]&x_l^{i_l}\\
\none[\ldots]&x_{l+1}^{i_k}
\end{ytableau}
}
\raisebox{-2cm}{$\quad\odfl{\beta^\ast}\quad$}
\scalebox{0.5}{
\ytableausetup{mathmode, boxsize=2em}
\begin{ytableau}
\none&\none&\none&x_1^{k_1}&\none[\ldots]&x_1^{i_l}&x_1^{i_{l}+1}&\none&\none&\none&\none&\none&\none&\none&\none&\none\\
\none[\ldots]&x_1^{i_k}&\none[\ldots]&x_2^{k_1}&\none[\ldots]&x_2^{i_l}&x_2^{i_{l}+1}&\none[\ldots]&
x_1^{k_3}&x_1^{k_{3}+1}&\none[\ldots]&x&x_1^{k_{4}+1}&\none[\ldots]& x_1^{i_{1}}\\
\none[\ldots]&x_2^{i_k}&\none[\ldots]&x_3^{k_1}&\none[\ldots]&*(GreenD)x_2^{k_3}&\none&\none[\ldots]&
*(GreenL)x_1^{k_4}&x_2^{k_{3}+1}&\none[\ldots]&x_2^{k_4}&x_2^{k_{4}+1}&\none[\ldots]\\
\none[\ldots]&x_3^{i_k}&\none[\ldots]&x_4^{k_1}&\none[\ldots]&x_4^{i_l} \\
\none[\vdots]&\none[\vdots]&\none[\vdots]&\none[\vdots]&\none[\vdots]&\none[\vdots]\\
\none[\ldots]&x_{l-1}^{i_k}&\none[\ldots]&*(BlueD)x_{l-1}^{k_2}&\none[\ldots]&x_l^{i_l}\\
\none[\ldots]&x_l^{i_k}&\none[\ldots]&*(YellowD)x_l^{k_1}\\
\none[\ldots]&x_{l+1}^{i_k}
\end{ytableau}
}
\]
\[
\raisebox{-1.5cm}{$\quad\odfl{\gamma^\ast}\quad$}
\scalebox{0.5}{
\ytableausetup{mathmode, boxsize=2em}
\begin{ytableau}
\none[\ldots]&x_1^{i_k}&\none[\ldots]&x_1^{k_1}&\none[\ldots]&x_1^{i_l}&x_1^{i_{l}+1}&\none&\none&\none&\none&\none&\none&\none&\none&\none\\
\none[\ldots]&x_2^{i_k}&\none[\ldots]&x_2^{k_1}&\none[\ldots]&x_2^{i_l}&x_2^{i_{l}+1}&\none[\ldots]&
x_1^{k_3}&x_1^{k_{3}+1}&\none[\ldots]&x&x_1^{k_{4}+1}&\none[\ldots]& x_1^{i_{1}}\\
\none[\ldots]&x_3^{i_k}&\none[\ldots]&x_3^{k_1}&\none[\ldots]&*(GreenD)x_2^{k_3}&\none&\none[\ldots]&
*(GreenL)x_1^{k_4}&x_2^{k_{3}+1}&\none[\ldots]&x_2^{k_4}&x_2^{k_{4}+1}&\none[\ldots]\\
\none[\ldots]&x_4^{i_k}&\none[\ldots]&x_4^{k_1}&\none[\ldots]&x_4^{i_l} \\
\none[\vdots]&\none[\vdots]&\none[\vdots]&\none[\vdots]&\none[\vdots]&\none[\vdots]\\
\none[\ldots]&x_{l}^{i_k}&\none[\ldots]&*(BlueD)x_{l-1}^{k_2}&\none[\ldots]&x_l^{i_l}\\
\none[\ldots]&x_{l+1}^{i_k}&\none&*(YellowD)x_l^{k_1}
\end{ytableau}
}
\raisebox{-1.5cm}{$\quad\odfl{\alpha^\ast}\quad$}
\scalebox{0.5}{
\ytableausetup{mathmode, boxsize=2em}
\begin{ytableau}
\none[\ldots]&x_1^{i_k}&x_1^{i_{k}+1}&\none[\ldots]&x_1^{k_1}&\none[\ldots]&x_1^{i_l}&x_1^{i_{l}+1}&\none&\none&\none&\none&\none&\none&\none&\none&\none\\
\none[\ldots]&x_2^{i_k}&x_2^{i_{k}+1}&\none[\ldots]&x_2^{k_1}&\none[\ldots]&x_2^{i_l}&x_2^{i_{l}+1}&\none[\ldots]&
x_1^{k_3}&x_1^{k_{3}+1}&\none[\ldots]&x&x_1^{k_{4}+1}&\none[\ldots]& x_1^{i_{1}}\\
\none[\ldots]&x_3^{i_k}&x_3^{i_{k}+1}&\none[\ldots]&x_3^{k_1}&\none[\ldots]&*(GreenD)x_2^{k_3}&\none&\none[\ldots]&
*(GreenL)x_1^{k_4}&x_2^{k_{3}+1}&\none[\ldots]&x_2^{k_4}&x_2^{k_{4}+1}&\none[\ldots]\\
\none[\ldots]&x_4^{i_k}&x_4^{i_{k}+1}&\none[\ldots]&x_4^{k_1}&\none[\ldots]&x_4^{i_l} \\
\none[\vdots]&\none[\vdots]&\none[\vdots]&\none[\vdots]&\none[\vdots]&\none[\vdots]\\
\none[\ldots]&x_{l}^{i_k}&x_{l}^{i_{k}+1}&\none[\ldots]&*(BlueD)x_{l-1}^{k_2}&\none[\ldots]&x_l^{i_l}\\
\none[\ldots]&x_{l+1}^{i_k}&*(YellowD)x_l^{k_1}
\end{ytableau}
}
\]
\[
\raisebox{-1.5cm}{$\quad\odfl{\delta^\ast}\quad$}
\scalebox{0.5}{
\ytableausetup{mathmode, boxsize=2em}
\begin{ytableau}
\none[\ldots]&x_1^{i_k}&x_1^{i_{k}+1}&\none[\ldots]&x_1^{k_1}&\none[\ldots]&x_1^{i_l}&x_1^{i_{l}+1}&\none[\ldots]&
x_1^{k_3}&x_1^{k_{3}+1}&\none[\ldots]&x&x_1^{k_{4}+1}&\none[\ldots]& x_1^{i_{1}}\\
\none[\ldots]&x_2^{i_k}&x_2^{i_{k}+1}&\none[\ldots]&x_2^{k_1}&\none[\ldots]&x_2^{i_l}&x_2^{i_{l}+1}&\none[\ldots]&
*(GreenL)x_1^{k_4}&x_2^{k_{3}+1}&\none[\ldots]&x_2^{k_4}&x_2^{k_{4}+1}&\none[\ldots]\\
\none[\ldots]&x_3^{i_k}&x_3^{i_{k}+1}&\none[\ldots]&x_3^{k_1}&\none[\ldots]&*(GreenD)x_2^{k_3}&\none\\
\none[\ldots]&x_4^{i_k}&x_4^{i_{k}+1}&\none[\ldots]&x_4^{k_1}&\none[\ldots]&x_4^{i_l} \\
\none[\vdots]&\none[\vdots]&\none[\vdots]&\none[\vdots]&\none[\vdots]&\none[\vdots]\\
\none[\ldots]&x_{l}^{i_k}&x_{l}^{i_{k}+1}&\none[\ldots]&*(BlueD)x_{l-1}^{k_2}&\none[\ldots]&x_l^{i_l}\\
\none[\ldots]&x_{l+1}^{i_k}&*(YellowD)x_l^{k_1}
\end{ytableau}
}
\]
The resulted Young tableau is equal to~$\rho_{\Fr\Sr_n}^\top(c_1|_{1}\ldots|_{1}c_{k-1})\star_{S_r} \ytableausetup{smalltableaux}
\begin{ytableau}
x
\end{ytableau}$, showing the claim.

\begin{lemma}
The rewriting system $\Fr\Sr_n$ is terminating.
\label{Lemma:TerminationJeuDeTaquin}
\end{lemma}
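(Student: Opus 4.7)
\quad
The plan is to exhibit a well-founded order on $\Scol{n}$ that is strictly decreased by each rule of $\Fr\Sr_n$. The natural candidate is the top-left sliding order $\ll_{tl}$ of~\ref{SSS:TopLEftSlidingOrder}, which is well-founded as the lexicographic composition of three well-founded ingredients: the number of columns~$||\cdot||$, the total length $tl(\cdot)$ compared with~$\prec_{revlex}$, and the top deviation~$d^\top(\cdot)$ compared with~$\prec_{lex}$. Since a rewriting step is applied in a local context $|_{p'}w_1\,\cdot\,w_2|_{q'}$ whose surrounding columns and their top heights are unchanged, it suffices to check, for each of the six rule shapes in $\Lr\Sr_n\cup\Ir\Sr_n\cup\Tr\Sr_n$, that $t(\mu)\ll_{tl}s(\mu)$ on the two columns actually involved.

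First I would handle the rules that redistribute boxes between the two adjacent columns, namely $\alpha$, $\delta^\alpha$ in $\Lr\Sr_n$ and $\beta$, $\delta^\beta$ in $\Ir\Sr_n$. None of these alters $||w||$, but each of them strictly shortens the right column~$c_j$: in $\alpha$ the sub-column $c_j^{m+1}\cdots c_j^{i_2}$ is transferred under $c_i$; in $\beta$ the single box $c_j^m$ migrates to $c_i$; and the two $\delta$-variants perform an analogous migration of a sub-column. Because $tl$ is compared in \emph{reverse} lexicographic order, a strict decrease of the last affected entry of $tl$ is enough to conclude $tl(t(\mu))\prec_{revlex}tl(s(\mu))$, hence $t(\mu)\ll_{tl}s(\mu)$.

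Next, the top-sliding rules $\gamma$ and $\delta$ of $\Tr\Sr_n$ preserve both $||w||$ and $tl(w)$ but shift the left column (or one of its sub-columns) strictly upwards by $s-r>0$ positions. The deviations $h_{u_k}$ of the columns situated to the left of $c_i$ are unchanged, since the rule does not touch them; and the first affected~$h_{u_k}$, that of the column being pushed up, strictly decreases. Thus $d^\top(t(\mu))\prec_{lex}d^\top(s(\mu))$, and again $t(\mu)\ll_{tl}s(\mu)$.

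The main obstacle I expect is the bookkeeping for $\delta^\alpha$, $\delta^\beta$ and $\delta$, whose source and target patterns interleave several sub-columns that move both horizontally and vertically, and where the reference top position of the whole string can shift. One has to verify, in each sub-case, that no component of $d^\top$ situated before the guaranteed decrease is spuriously increased; this is precisely why $tl$ is placed above~$d^\top$ in the hierarchy, so that any reshaping of columns is already absorbed by the $tl$-decrease and $d^\top$ only has to discriminate among the purely vertical slides. Once this case analysis is carried out, termination of $\Fr\Sr_n$ follows from the well-foundedness of~$\ll_{tl}$.
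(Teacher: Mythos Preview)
Your proposal is essentially the paper's proof: use the well-founded order $\ll_{tl}$ and check that each family of rules in $\Lr\Sr_n$, $\Ir\Sr_n$, $\Tr\Sr_n$ strictly decreases it, with $\Lr\Sr_n$ and $\Ir\Sr_n$ handled via the $tl$-component and $\Tr\Sr_n$ via the top deviation $d^\top$.

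One small inaccuracy: you assert that the $\Ir\Sr_n$ rules $\beta,\delta^\beta$ never alter $||w||$. The paper explicitly splits into two sub-cases here, noting that the target of $\beta$ or $\delta^\beta$ may consist of a single column, in which case $||w||$ strictly drops and the argument terminates already at the first level of the lexicographic hierarchy. Your claim that only $tl$ moves is therefore not quite right in that boundary situation; but since a drop in $||\cdot||$ is even stronger, the conclusion $t(\mu)\ll_{tl}s(\mu)$ still holds and your overall plan goes through.
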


\begin{proof}
We prove that for any reduction~$w \dfl w'$ with respect to~$\Fr\Sr_n$, we have $w\ll_{tl} w'$ for the order~$\ll_{tl}$ defined~\ref{SSS:TopLEftSlidingOrder}. 
If~$w \dfl w'$ is a reduction with respect to~$\Lr\Sr_n$, then~$||w|| = ||w'||$ and~$tl(w)\prec_{revlex} tl(w')$, showing that~$w\ll_{tl} w'$. Suppose now that the reduction is with respect to~$\Ir\Sr_n$. There are two cases depending on the number of columns in the targets of the rules~$\beta$ and~$\delta^\beta$. If the targets consist only of one column
then~$||w||<||w'||$. If they consist of two columns then~$||w||= ||w'||$ and~\mbox{$tl(w)\prec_{revlex} tl(w')$.} Then, if $w\dfl w'$ is a reduction with respect to~$\Ir\Sr_n$, we obtain~$w\ll_{tl} w'$. Finally, for any reduction~\mbox{$w\dfl w'$} with respect to~$\Tr\Sr_n$, we have~$||w||=||w'||$,~$tl(w) = tl(w')$ and  $d^\top(w) \prec_{lex} d^\top(w')$, showing that~$w\ll_{tl} w'$.  
\end{proof}

\begin{lemma}
The rewriting system $\Fr\Sr_n$ is confluent.
\label{Lemma:ConfluenceJeuDeTaquin}
\end{lemma}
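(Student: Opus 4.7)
The plan is to invoke Lemma~\ref{L:Convergence}: since $\Fr\Sr_n$ is terminating by Lemma~\ref{Lemma:TerminationJeuDeTaquin}, it suffices to check that every critical branching of $\Fr\Sr_n$ is confluent. First I would classify the critical branchings. The source of every rule of $\Fr\Sr_n$ consists of exactly two adjacent columns $c_i|_p c_j$, with no rule source strictly containing another, so no inclusion branching occurs. Moreover, the side conditions attached to the six rule families $\alpha,\delta^{\alpha},\beta,\delta^{\beta},\gamma,\delta$ are mutually exclusive at a fixed pair of columns, so position overlaps give rise only to aspherical branchings. Consequently the genuine critical branchings are exactly the string overlaps of the form
\[
\xymatrix @C=1.8em @R=0em{
& c_i |_{p_1'} c_j' |_{p_2} c_k
\\
c_i |_{p_1} c_j |_{p_2} c_k
  \ar@2@/^2ex/[ur] ^-{\mu |_{p_2} c_k}
  \ar@2@/_2ex/[dr] _-{c_i |_{p_1}\nu}
&\\
& c_i |_{p_1} c_j'' |_{p_2'} c_k
}
\]
in which a rule $\mu$ of $\Fr\Sr_n$ reduces the pair $(c_i,c_j)$ and a rule $\nu$ of $\Fr\Sr_n$ reduces $(c_j,c_k)$.

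For each such critical branching, I would close the diagram by continuing to reduce each branch (by some convenient normalization strategy, for example the rightmost strategy $\rho_{\Fr\Sr_n}^{\perp}$) until both branches reach Young tableaux. By Lemma~\ref{Lemma:SchenstedInerstionsAndNormalizations}, the normal form of any two-column configuration is the Schensted insertion of the right column into the left one. Iterating this identity, both branches of the critical pair on $c_i |_{p_1} c_j |_{p_2} c_k$ reach normal forms of the form $(c_i \star_{S_r} c_j)\star_{S_r} c_k$ and $c_i \star_{S_r} (c_j \star_{S_r} c_k)$ respectively. The confluence of the branching therefore reduces to showing, on this specific triple of columns, that these two Young tableaux coincide, which I would verify by following explicitly the bumping sequence produced by the right insertion algorithm in each configuration.

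The main obstacle is the sheer combinatorial bulk: each of the six rule families may appear as $\mu$ and each may appear as $\nu$, yielding up to $36$ overlap shapes, each further subdivided by the admissible relative lengths of the three columns $c_i,c_j,c_k$ and the gluing positions $p_1,p_2$. However, the case analysis organizes along two binary dichotomies: whether an adjacent pair of columns is already row-connected and row-increasing (where only the rules $\alpha,\beta,\gamma$ can fire) or requires redistribution of entries (where only the $\delta$-type rules fire). Once these two dichotomies are isolated, all configurations collapse onto a handful of canonical diagrams whose confluence is witnessed by a bounded number of rewriting steps, each diagrammatically identical to a step in the right Schensted insertion of Algorithm~\ref{A:RightSchensted}. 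Closing these canonical diagrams finishes the verification and establishes confluence.
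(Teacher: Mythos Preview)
Your overall strategy matches the paper's: invoke Lemma~\ref{L:Convergence}, reduce to critical branchings on three consecutive columns, and argue that both branches normalize to the same Young tableau, which amounts to the identity $(c_i\star_{S_r} c_j)\star_{S_r} c_k = c_i\star_{S_r}(c_j\star_{S_r} c_k)$ on columns. The paper also uses Lemma~\ref{Lemma:SchenstedInerstionsAndNormalizations} exactly as you do to identify the endpoint of each branch with an iterated Schensted product.

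The substantive divergence is in how this three-column associativity is established. You propose to ``verify by following explicitly the bumping sequence'' and suggest the $36$ overlap shapes collapse to a handful of canonical diagrams. The paper does something much crisper: it introduces the Sch\"utzenberger involution $c\mapsto c^\ast$ on columns (complement in $[n]$), extended antimultiplicatively to strings of columns, and uses the known compatibility $(c_i\star_{S_r}c_j\star_{S_r}c_k)^\ast = c_k^\ast\star_{S_r}c_j^\ast\star_{S_r}c_i^\ast$ (cf.\ Lenart). With this, reducing $|c_i|c_j|c_k|$ from the right is the involute of reducing $|c_k^\ast|c_j^\ast|c_i^\ast|$ from the left, and the latter is $c_k^\ast\star_{S_r}c_j^\ast\star_{S_r}c_i^\ast$ by the definition of $S_r$; applying $\ast$ again yields the desired equality. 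This replaces your open-ended case analysis by a single symmetry argument.

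The concern with your route is not correctness in principle but completeness: the ``handful of canonical diagrams'' is exactly the hard content, and you have not exhibited them. Moreover, since the paper's goal is to derive the cross-section property and the commutation of $S_r$ and $S_l$ \emph{from} this confluence lemma, you cannot appeal to associativity of the plactic product as a known fact; it must be proved at the level of columns. The involution does this in one stroke, whereas a direct bumping-route verification on three arbitrary columns is essentially a reproof of the Knuth cross-section theorem restricted to columns and is considerably more work than your sketch suggests.
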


\begin{proof}
Following Lemma~\ref{L:Convergence}, we prove that the rewriting system~$\Fr\Sr_n$ is confluent by showing the confluence of all its critical branchings.
Consider first the  rewriting system~$\Srs(\Colo{n}, \Yrow{n})$ whose rules  are of the form~$\gamma_{c,c'} : |c|c'| \dfl |c \star_{S_{r}} c'|$, for all~$c, c'$ in~$\Colo{n}$ such that~$c|_\glueY c' \neq c \star_{S_{r}} c'$.
Prove that starting from a string of columns consisting of three columns~$|c_i|c_j|c_k|$, we lead to the Young tableau~$|c_i\star_{S_r} c_j\star_{S_r}c_k|$ after applying at most three steps of reductions with respect~$\Srs(\Colo{n}, \Yrow{n})$ starting from the left or from the right. We prove this result using Sch\"{u}tzenberger's involution on columns as in~{\cite[Remark~3.2.7]{HageMalbos17}}. 
In one hand, by definition of Schensted's insertion~$S_r$, starting from~$|c_i|c_j|c_k|$, we lead to~$|c_i\star_{S_r} c_j\star_{S_r}c_k|$ after applying at most three steps of reductions with respect~$\Srs(\Colo{n}, \Yrow{n})$ starting from the left. That is, we have
\[
|c_i| c_j| c_k|
  \odfl{\gamma_{c_i,c_j}| c_k}
|c_n|_\glueY c_{n'}| c_k|
  \odfl{c_n|\gamma_{c_{n'},c_k}}
 | c_n| c_s|_\glueY c_{s'}|\odfl{\gamma_{c_{n},c_{s}}| c_{s'}}
|c_{i}\star_{S_r}c_{j}\star_{S_r}c_{k}|.\]
In an other hand, we have
\[
|c_i | c_j| c_k|\odfl{c_i| \gamma_{c_j,c_k}} |c_i| (c_j\star_{S_r}c_k)| = |c_i| c_l|_\glueY c_{l'}|\odfl{\gamma_{c_i,c_l}| c_{l'}} |(c_i \star_{S_r}c_l)| c_{l'}|=|c_m|_\glueY c_{m'}| c_{l'}|\odfl{c_m| \gamma_{c_{m'},c_{l'}}} |c_m| (c_{m'}\star_{s_r}c_{l'})|.
\]
Let us show that~$|c_m| (c_{m'}\star_{s_r}c_{l'})|=|c_{i}\star_{S_r}c_{j}\star_{S_r}c_{k}|$. By applying the involution on tableaux, we obtain
\[
|c_k^{\ast}| c_j^{\ast}| c_i^{\ast}|\dfll  
|(c_k^{\ast}\star_{S_r}c_j^{\ast})| c_i^{\ast}|
=|c_{l'}^{\ast}|_\glueY c_l^{\ast}| c_i^{\ast}|
\dfll |c_{l'}^{\ast}| (c_l^{\ast}\star_{S_r}c_i^{\ast})|
= |c_{l'}^{\ast}| c_{m'}^{\ast}|_\glueY c_m^{\ast}|
\dfll |(c_{l'}^{\ast}\star_{S_r}c_{m'}^{\ast})| c_m^{\ast}|.
\]
By definition of $S_r$, we have~$|c_k^{\ast}\star_{S_r}c_j^{\ast}\star_{s_r}c_i^{\ast}| = |(c_{l'}^{\ast}\star_{S_r}c_{m'}^{\ast})| c_m^{\ast}|$. Since~$(c_k^{\ast}\star_{s_r}c_j^{\ast}\star_{S_r}c_i^{\ast}) = (c_i\star_{S_r}c_j\star_{S_r}c_k)^{\ast}$, we deduce that~$|(c_i\star_{S_r}c_j\star_{S_r}c_k)^{\ast}| = |(c_{l'}^{\ast}\star_{S_r}c_{m'}^{\ast})| c_m^{\ast}|$. Finally, by applying the involution on tableaux, we obtain~$|(c_i\star_{S_r}c_j\star_{S_r}c_k)|=|c_m|(c_{m'}\star_{S_r}c_{l'})|$.

Following Lemma~\ref{Lemma:SchenstedInerstionsAndNormalizations}, for any rule~$|c_{i_1}|c_{i_2}|\dfl |c_{j_1}|c_{j_2}|$ in~$\Fr\Sr_n$, we have~$|c_{i_1}\star_{S_r} c_{i_2}| = \rho_{\Fr\Sr_n}^\top(|c_{j_1}|c_{j_2}|)$.  Hence, any critical branching of~$\Tr\Sr_n$ has the following confluence diagram
\[
\hspace{-1cm}
\xymatrix @C=2.5em @R=0.5em{
&
|c_{j}|c_{j'}|c_{i''}|
  \ar@2  [r] ^-{\rho_{\Fr\Sr_n}^\top (c_{j}|c_{j'})}
&
  |c_{k}|_{\glueY}  c_{k'}|c_{i''}|
  \ar@2  [r] ^-{\rho_{\Fr\Sr_n}^\top (c_{k'}|c_{i''})}
&
 |c_{k}|c_{l}|_{\glueY} c_{l'}|
\ar@2  [r] ^-{\rho_{\Fr\Sr_n}^\top (c_{k}|c_{l})}
&
 |c_{k'}|_{\glueY} c_{l''}|c_{l'}|
\ar@2 @/^1ex/ [dr] ^{\delta_{c_{l''},c_{l'}}}
\\
|c_{i}|c_{i'}|c_{i''}|
  \ar@2 @/^1ex/ [ur] ^-{\epsilon_1}
  \ar@2 @/_1ex/[dr] _-{\epsilon_2}
&&&&&{|c_{i}\star_{S_r}c_{i'}\star_{S_r}c_{i''}|}
\\
&
|c_{i}|c_{m}|c_{m'}|
  \ar@2 [r] _-{\rho_{\Fr\Sr_n}^\top (c_{m}|c_{m'})}
&
|c_{i}|c_{n}|_{\glueY} c_{n'}|
 \ar@2 [r] _-{\rho_{\Fr\Sr_n}^\top (c_{i}|c_{n})}
&
|c_{k'}|_{\glueY} c_{s'}|c_{n'}|
\ar@2 [r] _-{\rho_{\Fr\Sr_n}^\top (c_{s'}|c_{n'})}
&
|c_{k'}|c_{l''}|_{\glueY} c_{l'}|
\ar@2  @/_1ex/[ur] _-{\gamma_{c_{k'},c_{l''}}}
}
\]
where $\epsilon_1$ and $\epsilon_2$ are $\Tr\Sr_n$-reductions and where some indicated  rules can correspond to identities,  such that~$c_{k}|_{\glueY}  c_{k'} = c_{j}\star_{S_r}c_{j'}$,~$c_{l}|_{\glueY}  c_{l'} = c_{k'}\star_{S_r}c_{i''}$,~$c_{k'}|_{\glueY}  c_{l''} = c_{k}\star_{S_r}c_{l}$,~\mbox{$c_{n}|_{\glueY}  c_{n'} = c_{m}\star_{S_r}c_{m'}$,}~\mbox{$c_{k'}|_{\glueY}  c_{s'} = c_{i}\star_{S_r}c_{n}$} and~$c_{l''}|_{\glueY}  c_{l'} =c_{s'}\star_{S_r} c_{n'}$. 
\end{proof}

\subsection{Jeu de taquin as morphism of monoids}
\label{SS:JeuDeTaquinMorphism}

In this subsection, we prove the compatibility of the rewriting system~$\Fr\Sr_n$ with the plactic congruence. 

\begin{theorem}
\label{T:MorphismRect}
The rectification map $\rect:\dSkew{n}\to \Young{n}$ is a surjective map that satisfies the following two properties:
\begin{enumerate}[\bf i)]
\item for any rule~$d \dfl d'$ in~$\Fr\Sr_n$, we have~$R_{SW}(d) \approx_{\Pl_n} R_{SW}(d')$, 
\item for all $w,w'\in [n]^\ast$, $w \approx_{\Pl_n} w'$ implies $\nf([w]_s,\Fr\Sr_n) = \nf([w']_s,\Fr\Sr_n)$.
\end{enumerate}
\end{theorem}

\begin{proof}
The surjectivity of~$\rect$ is a consequence of the relation~$\rect([R_{SW}(d)]_s) =~d$, that holds for any $d$ in~$\Young{n}$.
Prove first that  for any rule~$d \dfl d'$ in~$\Fr\Sr_n$, we have~$R_{SW}(d) \approx_{\Pl_n} R_{SW}(d')$. It suffices to show that~$R_{SW}(s(\eta)) \approx_{\Pl_n} R_{SW}(t(\eta))$, for every rule~$\eta$ in $\Fr\Sr_n$.
This is obvious for~$\gamma$ and~$\delta$. For the rule~$\alpha$, consider~$R_{SW}(s(\alpha))=c_i^{i_1}\ldots c_i^{1}c_j^{i_2}\ldots c_j^{m+1}c_j^{m}\ldots c_j^1$ and~$R_{SW}(t(\alpha))=c_j^{i_2}\ldots c_j^{m+1}c_i^{i_1}\ldots c_i^{1}c_j^{m}\ldots c_j^1$. On one hand, we have

$\begin{array}{rl}
& c_i^{i_1}\ldots {\bf c_i^{2}c_i^{1}c_j^{i_2}}\ldots c_j^{m+1}c_j^{m}\ldots c_j^1 \overset{(\ref{E:KnuthRelations})}{=}\ldots \overset{(\ref{E:KnuthRelations})}{=} c_i^{i_1}c_j^{i_2}c_i^{i_{1}-1}\ldots {\bf c_i^{2}c_i^{1}c_j^{i_{2}-1}}\ldots c_j^{m+1}c_j^{m}\ldots c_j^1\\
&\newline\overset{(\ref{E:KnuthRelations})}{=}\ldots \overset{(\ref{E:KnuthRelations})}{=}  {\bf c_i^{i_1}c_j^{i_2}c_j^{i_{2}-1}}c_i^{i_{1}-1}\ldots  c_i^{2}c_i^{1}c_j^{i_{2}-2}\ldots c_j^{m+1}c_j^{m}\ldots c_j^1   
\overset{(\ref{E:KnuthRelations})}{=}\\
&c_j^{i_2}c_i^{i_1}c_j^{i_{2}-1}c_i^{i_{1}-1}\ldots  {\bf c_i^{2}c_i^{1}c_j^{i_{2}-2}}\ldots c_j^{m+1}c_j^{m}\ldots c_j^1\overset{(\ref{E:KnuthRelations})}{=}\ldots\overset{(\ref{E:KnuthRelations})}{=} c_j^{i_2}\ldots c_j^{m+1}c_j^{m}c_i^{i_1}\ldots c_i^{1}c_j^{m-1}\ldots c_j^1.
\end{array}$

\noindent In an other hand, we have~$c_j^{i_2}\ldots c_j^{m+1}c_i^{i_1}\ldots {\bf c_i^{2}c_i^{1}c_j^{m}}\ldots c_j^1\overset{(\ref{E:KnuthRelations})}{=}\ldots\overset{(\ref{E:KnuthRelations})}{=} c_j^{i_2}\ldots c_j^{m+1}c_j^{m}c_i^{i_1}\ldots c_i^{1}c_j^{m-1}\ldots c_j^1$.
Hence, $R_{SW}(s(\alpha))\approx_{\Pl_n} R_{SW}(t(\alpha))$. Similarly, we show the property for rules~$\beta$,~$\delta^\alpha$ and~$\delta^\beta$.

Prove now that for all $w,w'\in [n]^\ast$, $w \approx_{\Pl_n} w'$ implies $\nf([w]_s,\Fr\Sr_n) = \nf([w']_s,\Fr\Sr_n)$. Since~$\Fr\Sr_n$ is convergent, we show that for all~$w,w'\in~[n]^\ast$, $w\approx_{\Pl_n} w'$ implies $[w]_s\approx_{\Fr\Sr_n} [w']_s$. 
Suppose first that~$w =ux_1{\bf xzy}y_1v$ and~$w' =ux_1{\bf zxy}y_1v$,  for all~$1\leq x\leq y<z\leq n$,~$u,v\in~[n]^\ast$ and~$x_1, y_1\in~[n]$, and show that~$[w]_s\approx_{\Fr\Sr_n} [w']_s$. We consider the following  cases:

\noindent \emph{\bf Case 1.}~$x_1\leq x$ and~$y\leq y_1$.
\[
\ytableausetup{mathmode, boxsize=1.2em}
\xymatrix @C=0.4em @R=0.4em{
{ux_1 zxyy_1v}
   \ar@2[r] ^-{}
     \ar@1 [d]_-{[w']_s}
& {ux_1 xzyy_1v}
     \ar@1 [d] ^-{[w]_s}
\\
     \raisebox{-0.15cm}{$[ux_1]_s \;\big|\;$}
\scalebox{0.7}{
\begin{ytableau}
x&y\\
z
\end{ytableau}}
 \raisebox{-0.15cm}{$\;\big|\;[y_{1}v]_s$}
& 
\raisebox{-0.15cm}{$[ux_1]_s \;\big|\;$}
\scalebox{0.7}{\begin{ytableau}
\none&y\\ 
x&z
\end{ytableau}}
\raisebox{-0.15cm}{$\;\big|\;[y_{1}v]_s$}
\ar@2 [d] ^-{\gamma}
\\
&\raisebox{-0.15cm}{$[ux_1]_s \;\big|\;$}
\scalebox{0.7}{
\begin{ytableau}
x&y\\ 
\none&z
\end{ytableau}}
\raisebox{-0.15cm}{$\;\big|\;[y_{1}v]_s$}
 \ar@2 [ul] ^-{\alpha}
   }
   \]
\noindent \emph{\bf Case 2.}~$x_1\leq x$ and~$y> y_1$. Suppose that~$v=y_2\ldots y_{q}y'v'$ such that~$y_1>y_2>\ldots>y_{q}$ and~$y_q\leq y'$.
\[
\xymatrix @C=0.1em @R=-0.6em{
{ux_1 zxyy_1v}
   \ar@2 [r] ^-{}
     \ar@1 [d]_-{[w']_s}
& {ux_1 xzyy_1v}
     \ar@1 [d] ^-{[w]_s}
\\
     \raisebox{-0.25cm}{$[ux_1]_s \;\big|\;$}
\scalebox{0.7}{\begin{ytableau}
\none&\none&y_q\\
\none&\none&\points\\
\none&\none&y_1\\
\none&x&y\\
\none &z
\end{ytableau}}
 \raisebox{-0.25cm}{$\;\big|\;[y'v']_s$}
\ar@2 [d] ^-{\beta^\ast}
& 
\raisebox{-0.25cm}{$[ux_{1}]_s \;\big|\;$}
\scalebox{0.7}{\begin{ytableau}
\none&y_q\\
\none&\points\\
\none&y_1\\
\none&y\\ 
x&z
\end{ytableau}}
\raisebox{-0.25cm}{$\;\big|\;[y'v']_s$}
\ar@2 [d] ^-{\gamma}
\\
\raisebox{-1cm}{$[ux_{1}]_s \;\big|\;$}
\scalebox{0.7}{\begin{ytableau}
\none&y_q\\
\none&\points\\
x&y_i\\
y_{i+1}&\none\\
\points&\none\\
y_1&\none\\
y&\none\\ 
z&\none
\end{ytableau}}
\raisebox{-1cm}{$\;\big|\;[y'v']_s$}
&
\raisebox{-1cm}{$[ux_{1}]_s \;\big|\;$}
\scalebox{0.7}{\begin{ytableau}
\none&y_q\\
\none&\points\\
x&y_i\\
\none&\points\\
\none&y_1\\
\none&y\\ 
\none&z
\end{ytableau}}
\raisebox{-1cm}{$\;\big|\;[y'v']_s$}
\ar@2 [l] ^-{\alpha}
 }
\]
\noindent \emph{\bf Case 3.}~$x<z<x_1$ and~$y\leq y_1$.
Suppose that~$u=u'x'x_p\ldots x_1$ such that~$x'\leq x_p$ and~$x_p> \ldots >x_1$.
\[
\xymatrix @C=0.5em @R=0.3em{
{ux_1 zxyy_1v}
   \ar@2 [r] ^-{}
     \ar@1 [d]_-{[w']_s}
& {ux_1 xzyy_1v}
     \ar@1 [d] ^-{[w]_s}
\\
 \raisebox{-0.15cm}{$[u'x']_s \;\big|\;$}
\scalebox{0.7}{
\begin{ytableau}
x&y\\
z\\
x_1\\
\points\\
x_p
\end{ytableau}}
\raisebox{-0.15cm}{$\;\big|\;[y_{1}v]_s$}
& 
\raisebox{-0.15cm}{$[u'x']_s \;\big|\;$}
\scalebox{0.7}{
\begin{ytableau}
\none&y\\ 
x&z\\
x_1\\
\points\\
x_p
\end{ytableau}}
\raisebox{-0.15cm}{$\;\big|\;[y_{1}v]_s$}
\ar@2 [l] ^-{\beta}
}
\]
The case~$x<z<x_1$ and~$y> y_1$ is studied in the same way.

\noindent \emph{\bf Case 4.}~$x<x_{1}\leq z$ and~$y>y_1$. We study similarly the case~$x<x_{1}\leq z$ and~$y\leq y_1$.
Suppose that~$u=u'x'x_p\ldots x_1$ and~$v=y_2\ldots y_{q}y'v'$  such that~$x'\leq x_p>\ldots>x_1$ and~$y_1>\ldots>y_{q}\leq y'$. 
\[
\xymatrix @C=0.5em @R=0.3em{
{ux_1 zxyy_1v}
   \ar@2 [r] ^-{}
     \ar@1 [d]_-{[w']_s}
& {ux_1 xzyy_1v}
     \ar@1 [d] ^-{[w]_s}
\\
     \raisebox{-0.15cm}{$[u'x']_s \;\big|\;$}
\scalebox{0.7}{
\begin{ytableau}
\none&\none&y_q\\
\none&\none&\points\\
\none&\none&y_1\\
\none&x&y\\
 x_1&z\\
\points&\none\\
x_p&\none
\end{ytableau}}
 \raisebox{-0.15cm}{$\;\big|\;[y'v']_s$}
\ar@2 [d] ^-{\beta}
& 
\raisebox{-0.15cm}{$[u'x']_s \;\big|\;$}
\scalebox{0.7}{
\begin{ytableau}
\none&y_q\\
\none&\points\\
\none&y_1\\
\none&y\\ 
x&z\\
x_1&\none\\
\points&\none\\
x_p&\none
\end{ytableau}}
\raisebox{-0.15cm}{$\;\big|\;[y'v']_s$}
\\
\raisebox{-0.15cm}{$[u'x']_s \;\big|\;$}
\scalebox{0.7}{
\begin{ytableau}
\none&\none&y_q\\
\none&\none&\points\\
\none&\none&y_1\\
x&z&y\\
 x_1\\
\points&\none\\
x_p&\none
\end{ytableau}}
 \raisebox{-0.15cm}{$\;\big|\;[y'v']_s$}
\ar@2 [r] ^-{\delta^\beta}
&
\raisebox{-0.15cm}{$[u'x']_s \;\big|\;$}
\scalebox{0.7}{
\begin{ytableau}
\none&\none&y_q\\
\none&\none&\points\\
\none&\none&y_1\\
\none &y\\
x&z\\
 x_1\\
\points&\none\\
x_p&\none
\end{ytableau}}
 \raisebox{-0.15cm}{$\;\big|\;[y'v']_s$}
   \ar@2 [u] _-{\beta^\ast}
 }
\]
Suppose now that~$w =uy_{1}{\bf yzx}x_{1}v$ and~$w' =uy_{1}{\bf yxz}x_{1}v$, for all~$1\leq x<y\leq z\leq n$,~$u,v\in~[n]^\ast$ and~$x_1, y_1\in~[n]$, and show that~$[w]_s\approx_{\Fr\Sr_n} [w']_s$.  If~$y_{1}\leq y$ and~$x\leq x_1$, then 
\[
\xymatrix @C=0.5em @R=0.7em{
{uy_{1}yzxx_{1}v}
   \ar@2 [r] ^-{}
     \ar@1 [d]_-{[w]_s}
& {uy_{1}yxzx_{1}v}
     \ar@1 [d] ^-{[w']_s}
\\
 \raisebox{-0.15cm}{$[uy_{1}]_s\;\big|\;$}
\scalebox{0.7}{
\begin{ytableau}
\none&x\\
y&z
\end{ytableau}}
 \raisebox{-0.15cm}{$\;\big|\;[x_{1}v]_s$}
\ar@2 [r] _-{\beta}
& 
 \raisebox{-0.15cm}{$[uy_{1}]_s\;\big|\;$}
\scalebox{0.7}{
\begin{ytableau}
x&z\\
y
\end{ytableau}}
 \raisebox{-0.15cm}{$\;\big|\;[x_{1}v]_s$}
}
 \]
The cases~($y_{1}> y$ and~$x> x_1$),~($y_{1}\leq y$ and~$x> x_1$) and~($y_{1}> y$ and~$x\leq x_1$) are studied similarly.
\end{proof}

As a consequence of Theorem~\ref{T:RewritingPropertiesYoung} and Theorem~\ref{T:MorphismRect}, we recover that the set of Young tableaux satisfies the cross-section property for the plactic monoid, we also deduce the commutation of Schensted's left and right insertion algorithm and that  the rectification map defines a surjective morphism of monoids between the sets~$\dSKrow{n}$ and~$\Yrow{n}$ equipped with the insertion products~$\star_{I_r^a}$ and~$\star_{S_r}$.

\begin{corollary}[cross-section property]
\label{C:Consequense1}
The following conditions hold
\begin{enumerate}[{\bf i)}]
\item  for all~$w,w'$ in~$[n]^\ast$, we have~$w\approx_{\Pl_n} w'$ if and only if~$C_{\Yrow{n}}(w) =C_{\Yrow{n}}(w')$,
\item the equality $C_{\Yrow{n}}(R_{SW}(d)) = C_{\Yrow{n}}(R_{SW}(\rect(d)))$ holds in~$\Young{n}$,  for any $d$ in $\dSkew{n}$. 
\end{enumerate}
\end{corollary}

\begin{proof}
Prove Condition~{\bf i)}. 
Consider two words~$w$ and~$w'$ over~$[n]$. If $w\approx_{\Pl_n} w'$, then by Theorem~\ref{T:MorphismRect} we have~$\nf([w]_s,\Fr\Sr_n) = \nf([w']_s,\Fr\Sr_n)$ and thus~$C_{\Yrow{n}}(w) =C_{\Yrow{n}}(w')$  by Theorem~\ref{T:RewritingPropertiesYoung}. Suppose now that~$C_{\Yrow{n}}(w) =C_{\Yrow{n}}(w')$. Following Theorem~\ref{T:RewritingPropertiesYoung}, we have~$\nf([w]_s,\Fr\Sr_n) = \nf([w']_s,\Fr\Sr_n)$, and then~$w\approx_{\Pl_n} w'$ by Theorem~\ref{T:MorphismRect}.

Prove Condition~{\bf ii)}.
Following Theorem~\ref{T:RewritingPropertiesYoung}, for any~$d$ in~$\dSkew{n}$, we have~$\rect(d) = C_{\Yrow{n}}(R_{SW}(d))$, showing the claim.
\end{proof}

\begin{corollary}[commutation of insertion algorithms]
\label{C:Consequense2}
\begin{enumerate}[{\bf i)}] 
\item For all $d$ in $\dSkew{n}$ and $x$ in $[n]$, we have~$\rect(d\insr{I_r^a} x)= \rect(d)\insr{S_r}x$ and~$\rect(x \insl{I_l^a}d)= x\insl{S_l}\rect(d)$.
\item The insertion algorithms~$S_r$ and $S_l$ commute, that is the following equality
\[
y \insl{S_l} (t\insr{S_r} x)
\: = \:
(y \insl{S_l} t) \insr{S_r} 
\]
holds in~$\Young{n}$,  for all $t$ in $\Young{n}$ and $x$ in $[n]$. 
\end{enumerate}
\end{corollary}

\begin{proof}
Prove Condition~{\bf i)}.
By Theorem~\ref{T:MorphismRect}, we have~$R_{SW}(\rect(d\insr{I_r^a} x))\approx_{\Pl_n}  R_{SW}(d\insr{I_r^a}x)$. Moreover, we have~$R_{SW}(d\insr{I_r^a}x)= R_{SW}(d)x$, hence~$R_{SW}(\rect(d\insr{I_r^a} x))\approx_{\Pl_n}   R_{SW}(d)x$ . On the other hand, the following equalities holds in $\Young{n}$:
\[
\rect(d)\insr{S_r}x = C_{\Yrow{n}}(R_{SW}(\rect(d))x) = \nf([R_{SW}(\rect(d))x]_s,\Fr\Sr_n).
\]
Then~$\rect(d)\insr{S_r} x \approx_{\Fr\Sr_n} [R_{SW}(\rect(d))x]_s$, and thus by Theorem~\ref{T:MorphismRect}, we deduce 
\[
R_{SW}(\rect(d)\insr{S_r}x)\approx_{\Pl_n}  R_{SW}(\rect(d))x \approx_{\Pl_n}  R_{SW}(d)x,
\]
showing that~$R_{SW}(\rect(d\insr{I_r^a} x))\approx_{\Pl_n}  R_{SW}(\rect(d)\insr{S_r}x)$. Finally, following the cross-section property, we obtain~$\rect(d\insr{I_r^a} x)= \rect(d)\insr{I_r^a}x$. Similarly, we show that~$\rect(x \insl{I_l^a}d)= x\insl{S_l}\rect(d)$.

Prove Condition~{\bf ii)}.
Following Condition~{\bf i)}, we have
\[
\rect(y \insl{I_l^a} (d \insr{I_r^a} x )) = y \insl{S_l} (\rect(d \insr{I_r^a} x)) = y \insl{S_l} (\rect(d)\insr{S_r} x),
\]
and
\[
\rect(y \insl{I_l^a} (d \insr{I_r^a} x )) = \rect((y \insl{I_l^a} d)) \insr{S_r} x =(y \insl{S_l} \rect(d)) \insr{S_r} x,
\]
for all $d$ in $D$ and $x,y$ in $[n]$.
By commutation of $I_r^a$ and $I_r^a$, we deduce the following equality
\[
y \insl{S_l} (\rect(d) \insr{S_r} x )  
\:=\:
(y \insl{S_l} \rect(d)) \insr{S_r} x.
\]
The map $\rect$ being surjective, we deduce that $S_r$ and $S_l$ commute.
\end{proof}

\begin{corollary}[morphism of monoids]
\label{C:Consequense3}
 The map~$\rect$ induces a morphism of monoids between~$(\dSKrow{n}, \star_{I_r^a})$ and~$(\Yrow{n},\star_{S_r})$.
\end{corollary}

\begin{proof}
Following Condition~{\bf i)} of Corollary~\ref{C:Consequense2}, we first prove that $\rect(d\insr{I_r^a}u) =\rect(d)\insr{S_r}u$, for all $d$ in~$\dSkew{n}$ and $u$ in~$[n]^\ast$ by induction on $|u|$. Suppose the equality holds when $|u|=k-1$, then for $y$ in $[n]$ we have
\[
\begin{array}{rl}
\rect(d\insr{I_r^a}uy) 
= \rect((d\insr{I_r^a}u)\insr{I_r^a}y) 
&= \rect((d\insr{I_r^a}u))\insr{S_r}y\\
&=(\rect(d)\insr{S_r}u)\insr{S_r}y) 
= \rect(d)\insr{S_r}uy.
\end{array}
\]
In an other hand, following Condition~{\bf ii)} of Corollary~\ref{C:Consequense1}, we have $C_{\Yrow{n}}(R_{SW}(d')) =C_{\Yrow{n}}( R_{SW}(\rect(d')))$, for all $d'$ in~$\dSkew{n}$, hence $C_{\Yrow{n}}(R_{SW}(\rect(d))R_{SW}(d'))=C_{\Yrow{n}}(R_{SW}(\rect(d))R_{SW}(\rect(d')))$.
 As a consequence, we have
\[
 \begin{array}{rl}
\rect(d\star_{I_r^a} d') 
= \rect(d\insr{I_r^a}R_{SW}(d') 
&= \rect(d)\insr{S_r}R_{SW}(d')\\
&=\rect(d)\insr{S_r}R_{SW}(\rect(d') 
= \rect(d)\star_{S_r} \rect(d'),
\end{array}
\]
for all~$d,d'$ in $\dSkew{n}$, showing the claim.

\end{proof}

\subsubsection{Remark} Note that Schensted's insertion algorithms are related to the jeu de taquin by the following formulas
\[
t \insr{S_r} x \;=\; \rect\big([R_{SW}(t)]_s\insr{I_{r}^{a}} x\big),
\qquad\quad
x \insl{S_l} t \;=\; \rect\big(x \insl{I_l^a}[R_{SW}(t)]_s \big),
\]
for all $t$ in~$\Young{n}$ and $x$ in $[n]$.
Note also that the associativity of~$\star_{S_r}$ is also deduced from the morphism~$\pi_{tq}$. Indeed, for all $t$ in~$\Young{n}$, and $x$ in $[n]$, we have
$
t \insr{S_r} x = 
\rect(t|_1 \ytableausetup{smalltableaux}
\begin{ytableau}
x
\end{ytableau})
$, and thus~$
t\star_{S_r} t'
\:=\:\rect(t |_s t')
$, for all~$t,t'\in \Young{n}$.
By Theorem~\ref{T:RewritingPropertiesYoung},  we obtain
\[
(t\star_{S_r} t') \star_{S_r} t'' 
\:=\: 
\rect(t |_s t' |_s t'') 
\:=\: 
t \star_{S_r} (t' \star_{S_r} t''),
\]
for any~$t,t',t''\in \Young{n}$.

\section{Conclusion and perspectives}
In this article, we have introduced the notion of string of columns rewriting system as rewriting systems over glued sequences of columns.
This gives a rewriting framework  to prove the confluence of   Schützenberger's jeu de taquin algorithm defined on the structure of tableaux.  Our construction leads us to formulate several perspectives:
\begin{itemize}
\item [$\bullet$]
In~\cite{HageMalbos17}, we make explicit the relations among the relations of the Knuth relations for the plactic monoid of type A.
We expect that the rewriting presentation of the jeu de taquin introduced in this article could make explicit the relations among the relations for the cross-section property on sequence of columns  for the plactic monoid of type~A. 
Such a study of the relations among the relations for the presentations of   a monoid constitutes the first step in an explicit construction of a cofibrant approximation of the monoid in the category of~$(\omega,1)$-categories and of actions of the monoid on categories, see~\cite{GaussentGuiraudMalbos15, GuiraudMalbos12advances,  HageMalbos17, HageMalbos22}.

\item [$\bullet$]  Schützenberger's jeu de taquin gives a proof of the Littlewood--Richardson rule which is a combinatorial description of the coefficients that arise when decomposing a product of two Schur polynomials as a linear combination of other Schur polynomials. In particular, these coefficients count certain types of skew tableaux that are rectified by the jeu de taquin to Young tableaux. We except that these coefficients could be also described by a rewriting approach using a zigzag sequences of reductions from certain types of skew tableaux to their normal forms with respect to the rewriting  presentation of the jeu de taquin introduced in this article.

\item [$\bullet$] The construction applied in this article on the jeu de taquin could be also applied on similar algorithms defined on other structures of tableaux,~\cite{Lecouvey02, Lecouvey03, ThomasYoung09, Tewari2015, RomikSniady15, Hage2021Super}. 
In particular, we expect constructive proofs by rewriting of the properties relating these algorithms to the plactic monoids of classical types and the super plactic monoid which are respectively related to the representations of the finite dimensional semisimple Lie algebras of classical types and the general Lie superalgebra.

\end{itemize}

\begin{small}
\renewcommand{\refname}{\Large\textsc{References}}
\bibliographystyle{plain}
\bibliography{biblioCURRENT}
\end{small}

\quad

\vfill

\begin{flushright}
\begin{small}
\noindent \textsc{Nohra Hage} \\
\url{nohra.hage@univ-catholille.fr} \\
Faculté de Gestion, Economie \& Sciences (FGES),\\
Université Catholique de Lille,\\
60 bd Vauban, \\
CS 40109, 59016 Lille Cedex, France\\

\bigskip

\noindent \textsc{Philippe Malbos} \\
\url{malbos@math.univ-lyon1.fr} \\
Univ Lyon, Universit\'e Claude Bernard Lyon 1\\
CNRS UMR 5208, Institut Camille Jordan\\
43 blvd. du 11 novembre 1918\\
F-69622 Villeurbanne cedex, France
\end{small}
\end{flushright}

\vspace{0.25cm}

\begin{small}---\;\;\today\;\;-\;\;\hhmm\;\;---\end{small} \hfill
\end{document}